\theoremstyle{plain}
\newtheorem{theorem}{Theorem}
\newtheorem{lemma}{Lemma}
\theoremstyle{definition}
\newtheorem{definition}{Definition}
\theoremstyle{remark}
\numberwithin{equation}{section}
\begin{document}

\doublespacing

\begin{center}
{\bf\Large A phase transition in energy-filtered RNA secondary structures}
\\
\vspace{15pt} Hillary S.~W. Han and Christian M. Reidys$^{\,\star}$
\end{center}

\begin{center}
         Department of Mathematics and Computer Science  \\
         University of Southern Denmark, Denmark \\
         Phone: *45-24409251 \\
         Fax: *45-65502325 \\
         email: duck@santafe.edu
\end{center}

\centerline{\bf Abstract}
In this paper we study the effect of energy parameters on minimum free
energy (mfe) RNA secondary structures. Employing a simplified combinatorial
energy model, that is only dependent on the diagram representation and that
is not sequence specific, we prove the following dichotomy result.
Mfe structures derived via the Turner energy parameters contain only
finitely many complex irreducible substructures and just minor parameter
changes produce a class of mfe-structures that contain a large number of
small irreducibles.
We localize the exact point where the distribution of irreducibles
experiences this phase transition from a discrete limit to a central limit
distribution and subsequently put our result into the context of quantifying
the effect of sparsification of the folding of these respective
mfe-structures. We show that the sparsification of realistic mfe-structures
leads to a constant time and space reduction and that the sparsifcation of
the folding of structures with modified parameters leads to a linear time
and space reduction.
We furthermore identify the limit distribution at the phase transition
as a Rayleigh distribution.

{\bf Keywords}: RNA secondary structure, loops-based, energy model,
                dominant singularity, limit distribution


\section{Introduction}

An RNA molecule is described by its primary structure, a linear string
composed of the nucleotides {\bf A}, {\bf G}, {\bf U} and {\bf C}, referred
to as the backbone. Each nucleotide can form a base pair by interacting
with at most one other nucleotide by establishing hydrogen bonds. Here
we restrict ourselves to
Watson-Crick base pairs {\bf GC} and {\bf AU} as well as the wobble base
pairs {\bf GU}. In the following, base triples as well as other types of more
complex interactions are neglected. RNA structures can be presented as
diagrams by drawing the backbone horizontally and all base pairs as arcs
in the upper halfplane; see Figure~\ref{F:RNAp}.
This set of arcs is tantamount to our notion of coarse-grained RNA
structure. In particular, we shall ignore any spatial embedding or
geometry of the molecule beyond its collection of base pairs.
\begin{figure}[ht]
\centerline{\epsfig{file=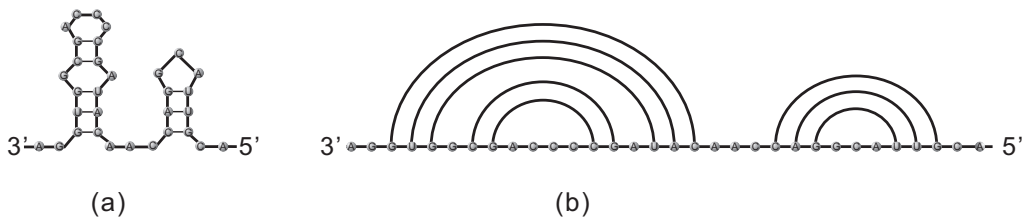,width=0.9\textwidth} \hskip8pt}
\caption{\small (A) An RNA secondary structure and (B) its diagram
representation.
}\label{F:RNAp}
\end{figure}
Accordingly, particular classes of base pairs translate into specific
structure categories, the most prominent of which are secondary
structures \cite{Kleitman:70,Nussinov:78,waterman:78a,waterman:79}.
When represented as diagrams, secondary structures have only non-crossing
base pairs (arcs). In the following an RNA secondary structure is
tantamount to a diagram without any crossing arcs. The combinatorics and
prediction of RNA secondary from primary structure was pioneered three
decades ago by Michael Waterman \cite{waterman:79, waterman:78a, waterman:80, waterman:94}.

RNA structures are a result of a folding of the primary sequence. The folded
configurations are energetically somewhat optimal. Here energy means
free energy, which is dominated by the loops forming between adjacent base
pairs and not by the hydrogen bonds of the individual base pairs
\cite{Mathews:99}. In addition sterical constraints imply certain minimum
arc-length conditions for minimum free energy configurations
\cite{Waterman:78aa}.

For a given RNA sequence polynomial-time dynamic programming (DP) algorithms
can be devised, finding such minimal energy configurations. The most commonly
used tools predicting simple RNA secondary structure \texttt{mfold}
\cite{Zuker:89} and the \texttt{Vienna RNA Package} \cite{Hofacker:94a},
are running at $O(N^2)$ space and $O(N^3)$ time solution.

In the context of polynomial-time DP algorithms a particular method, the
sparsification has been devised. Sparsification is tailored to speed up
DP-algorithms predicting minimum free energy (mfe)-secondary structures
\cite{spar:07,Backofen:11} by pruning certain computation paths
encountered in the DP-recursions, see Fig.~\ref{F:spar_sec}.
\begin{figure}[ht]
\begin{center}
\includegraphics[width=0.55\columnwidth]{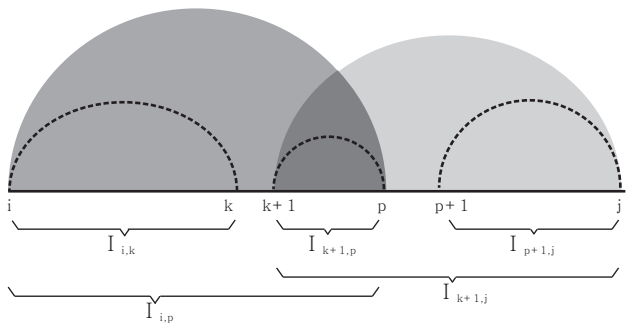}
\end{center}
\caption{\small Sparsification: suppose the optimal solution $I_{i,j}$ is
obtained from the optimal solutions $I_{i,k}$, $I_{k+1,p}$ and $I_{p+1,j}$.
Based on the recursions of the secondary
structures, $I_{i,k}$ and $I_{k+1,p}$ produce an optimal solution of
$I_{i,p}$. Similarly, $I_{k+1,p}$ and $I_{p+1,j}$ produce an optimal solution
of $I_{k+1,j}$.
Now, in order to obtain an optimal solution of $I_{i,j}$ it is sufficient to
consider either the grouping $I_{i,p}$ and $I_{p+1,j}$ or $I_{i,k}$ and
$I_{k+1,j}$.}
\label{F:spar_sec}
\end{figure}
In the context of the folding of RNA secondary structures, sparsification
reduces the DP-recursion paths to be based on so called candidates. A
candidate is in this case an interval, for which the optimal solution
cannot be written as a sum of optimal solutions of sub-intervals. Tracing
back these candidates gives rise to irreducible structures and the crucial
observation is here that these irreducibles appear only at a low frequency.
This means that there are only relatively few candidates have to be
considered, which in turn implies a significant reduction in time and space
complexity.

In this paper we study the effect of the particular choice of energy
parameters on sparsification. In other words, we study what happens to
RNA structures of a certain energy and quantify the effect of
sparsification of their mfe-folding. We shall see that this energy filtration
of secondary structures can have a dramatic effect on the structures having
significant implication for the effect of sparsification.

The energy parameters are associated to the loops of a secondary
structure, they are empirically measured enthalpic and entropic terms that
depend on loop sequence, length and type \cite{SantaLucia:96, Mathews:99}.
We shall restrict ourselves to a simplified notion of energy that does
not take into account the specifics of nucleotides but only depends on
the combinatorial representation the secondary structure. For instance,
the free energy of a hairpin loop $H$, $G_H$, is given by
\begin{equation}
G_{H} = \alpha_1 + \alpha_2\,\ell_{H}+ \alpha_3 \quad \text{\rm and} \quad
Q_{H} = v^{G_{H}},
\end{equation}
$\alpha_1$ being the penalty for forming $H$, $\alpha_2$ the penalty
for an unpaired base, $\alpha_3$ the score associated to a tetra-loop,
$\ell_{H}$ denoting the number of unpaired bases and $Q_{H}$ being the
weight of $H$. The other two loop-types are treated along these lines.

Equipped with this notion of ``combinatorial'' energy we study
the energy filtration of RNA secondary structures. In light of
the above discussion about the candidates and sparsification
we pay particular attention to irreducible RNA secondary
structures. One key question here being: under which conditions
does the probability of finding an irreducible minimum-free
energy structure tend to zero for larger and larger sequences?

The main results of this paper are as follows: we will show that
\begin{itemize}
\item for energy-parameters mimicking the established Turner energy model
      \cite{Mathews:99} sparsification implies a time and space reduction
      by a constant factor,
\item there exist energy-parameters close to those of the Turner energy
      model \cite{Mathews:99} for which sparsification implies a linear
      time reduction,
\item the effect of sparsification is closely connected to the distribution
      of irreducibles within secondary structures. To be precise we prove
      that this distribution undergoes a phase transition from a discrete
      limit law to a central limit law,
\item the limit distribution of irreducibles at the phase transition is
      a Rayleigh-law and a DP-folding in this regimen experiences a
      linear reduction in space and time.
\end{itemize}

\section{Some basic facts}

As mentioned above, we present an RNA secondary structure as a diagram
by drawing its backbone as a horizontal line containing vertices
corresponding to the labels of the nucleotides and each Watson-Crick
base pair as an arc or chord in the upper halfplane. Consequently, the
diagram representation ignores the particular type of nucleotides. Any
such pair may be inserted in two positions $i,j$ incident to an arc as
long as it is compatible with the Watson-Crick and {\bf G-U} base paring
rules.

The length of one chord $(i,j)$ is $\ell=j-i$ and in this paper, we only
consider RNA secondary structure with chord length $\geq 4$, see
Fig. ~\ref{F:secondary-struct}.
A chord connecting the first and last vertices is called a rainbow and
a RNA secondary structures exhibiting a rainbow is an irreducible RNA
secondary structure. This notion of irreducibility comes up naturally
when one decomposes a structure by means of cutting the backbone in two
positions without breaking any arcs.
\begin{figure}[ht]
\centerline{%
\epsfig{file=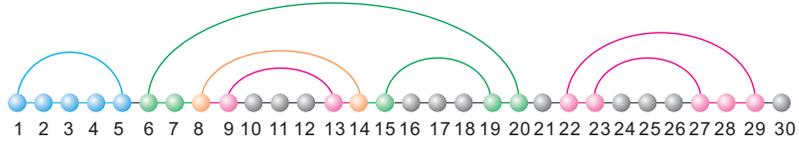,width=0.7\textwidth}\hskip15pt }
\caption{\small A diagram representation of secondary structure with
$\geq 4$.}
\label{F:secondary-struct}
\end{figure}
Let $\mathscr{S}(n)$ and $\mathscr{C}(n)$ denote the collections of
all linear chord diagrams of RNA secondary structures and irreducible
secondary structures on $n$ vertices, respectively. Let ${\bf s}(n)$ and
${\bf c}(n)$ denote the cardinalities of these sets.
Furthermore, setting
$\mathcal{S}=\cup_{n \geq 0} \mathscr{S}(n)$ and
$\mathcal{C}=\cup_{n \geq 0} \mathscr{C}(n)$ denote the set of
all linear chord diagrams of RNA secondary structures and irreducible
secondary structures, let $s \in \mathcal{S}$ and  $c \in \mathcal{C}$
denote a $\mathcal{S}$- or $\mathcal{C}$-structure.

Let $\mathscr{S}(n, j, i) \supseteq \mathscr{C}(n, j, i)$ denote
the collections of RNA secondary structures and irreducible RNA
secondary structures on $n \geq 0$ vertices having length $n$, energy $j$ and
weight $i$, then
\begin{equation}
\begin{split}
{\bf S}(z,v,p)=
\sum_{n \geq 0} \sum_{j}\, \sum_{i \geq 0} {\bf s}(n, j, i) z^n v^j p^i\\
{\bf C}(z,v,p)=
\sum_{n \geq 0} \sum_{j}\, \sum_{i \geq 0} {\bf c}(n, j, i) z^n v^j p^i.
\end{split}
\end{equation}

Thermodynamic models for nucleic acid secondary structure are based on a
decomposition of the base-pairing diagram of structures into distinct loops
that are associated with empirically measured enthalpic and entropic terms
that depend on loop sequence, length and type \cite{SantaLucia:96, Mathews:99}.
To obtain a better idea about these loops we give the diagram representations
of three types of Loops $L$:

\begin{itemize}
\item a {\sf Hairpin-loop} ($H$) consists of a chord $(i,j)$ with a sequence
of unpaired bases $[i+1, j-1]$. In particular, we have the restriction
$j-i \geq 4$; if $j-i=4$ ({\sf tetra-loop});
\item an {\sf interior-loop} ($I$) consists of two base pairs $(i,j)$ and
$(i_1, j_1)$ and three sequence of unpaired bases
$[i+1, i_1-1]$, $[j_1+1, j-1]$ and $[i_1-1, j_1-1]$. The case of
$i+1=i_1-1$ and $i_1-1=j_1-1$ is referred to as {\sf helix};
\item a {\sf multi-loop} ($M$) is a sequence:
$$( (i,j), (i_1, j_1), \ldots, (i_k, j_k), [i, i_1],
[j_{k-1}+1, i_{k}-1], [j_k+1, j])
$$
with sequences of unpaired bases $[j_{k-1}+1, i_{k}-1]$, for $k \geq 2$,
see Fig.~\ref{F:loops}.
\end{itemize}

\begin{figure}[ht]
\centerline{%
\epsfig{file=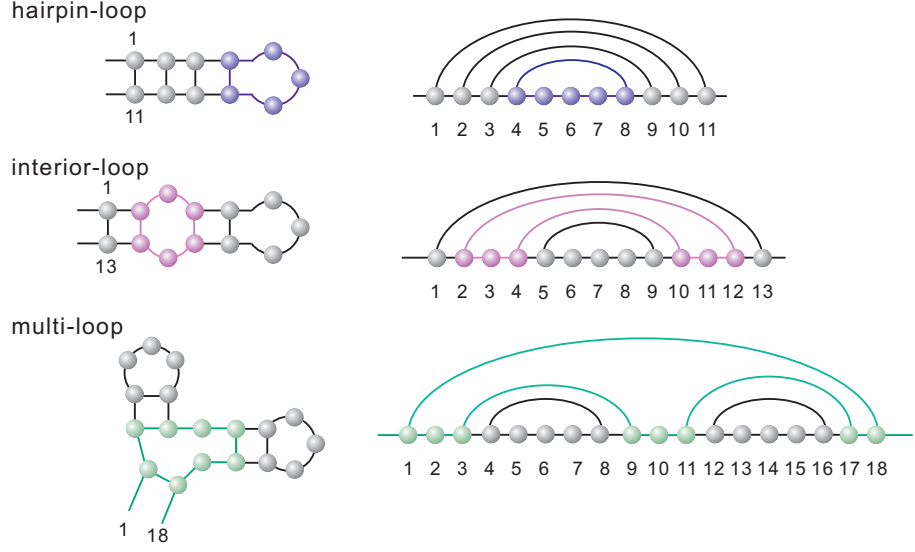,width=0.8\textwidth}\hskip15pt }
\caption{\small It shows hairpin-loop $H$, interior-loop $I$ and
multi-loop $M$.} \label{F:loops}
\end{figure}

The free energy $G_s$ of a secondary structure $s$ is the sum of the energies
of its constituent Loops $L$ \cite{Mathews:99}. Thus the total free energy
$G_s$ is given by $G_s=\sum_{L \in s} G_L$. This notion of energy allows one to
compute, by means of dynamic programming \cite{Hofacker:94a, Zuker:81}
the minimum free energy
configurations as well as the partition function \cite{McCaskill:90} as a
weighted sum $Q=\sum_{s \in \mathcal{S}} e^{G_s/RT}$, where R is the universal
gas constant, $T$ is the temperature and $e^{G_s/RT}$ is the weight of
sampling a secondary structure $s$ with free energy $G_s$.

In the following we consider a notion of energy that does not take into
account the specifics of nucleotides and only depends on the combinatorial
representation the secondary structure.
Our energy model is based on seven parameters
$\mathcal{P}=\{\alpha_1, \alpha_2, \alpha_3, \beta_1, \beta_2,
\gamma_1, \gamma_2\}$ defined as follows:
\begin{itemize}
\item {\sf hairpin-loop} $H$:
\begin{equation}
G_{H} \sim \alpha_1 + \alpha_2\,\ell_{H}+ \alpha_3 \quad and \quad
Q_{H} = v^{G_{H}},
\end{equation}
where $G_{H}$ is the free energy of $H$, $\alpha_1$ is the penalty for
forming $H$, $\alpha_2$ is the penalty for an unpaired base and
$\alpha_3$ is a score associated to a tetra-loop. Furthermore
$\ell_{H}$ denotes the number of unpaired bases and $Q_{H}$ is the
weight of $H$.
\item
{\sf Interior-loop} $I$:
\begin{equation}
G_{I}\sim {\beta_1}+\ell_{I}{\beta_2} \quad and \quad Q_{I} = v^{G_{I}}
\end{equation}
where $G_{I}$ is free energy of $I$, $\beta_1$ is a favorable bonus of
a ${\sf helix}$ and $\beta_2$ is the penalty of an unpaired base of $I$.
Furthermore $\ell_{I}$ denotes the total number of unpaired bases of $I$
and $Q_{I}$ is the weight of $I$.
\item
{\sf Multi-loop} $M$:
\begin{equation}
G_{M} \sim \gamma_1+B \gamma_2 + U \times 0 \quad and \quad
Q_{M} =  v^{G_{M}}
\end{equation}
$G_{M}$ is the free energy of $M$, $\gamma_1$ is the penalty for the
formation of a multiloop, $B$ is the number of base pairs defining
the multiloop (including the closing pair $i \cdot j$), and $U$ is
the number of unpaired bases in the multiloop; $\gamma_2$ is the penalty
for the base pair defining the multiloop and $Q_{M}$ is the weight of one
$M$.
\end{itemize}

This energy model induces the
partition function
\begin{equation}
Q=\sum_{s \in \mathscr{S}(n)} v^{G_s},
\end{equation}
where $v^{G_s}$ the weight for sampling a secondary structure with free
energy $G_s$.

\section{Energy filtration}

\begin{theorem}\label{T:Crecursion}
The trivariate generating function ${\bf C}(z, v, p)$ counting RNA secondary
structures having length $n$, energy $i$ and $j$ arcs, satisfies the following
recursion:
\begin{equation}\label{E:constructC}
\begin{split}
{\bf C}(z, v, p)&=
p\,v^{\alpha_1}z^2 \left(\frac{(zv^{\alpha_2})^3}{1-zv^{\alpha_2}}
+z^4v^{\alpha_3}-z^4v^{4\alpha_2}\right)\\
&+p\,\frac{z^2v^{\beta_1}}{(1-zv^{\beta_2})^2} {\bf C}(z, v, p)
+ p\, \frac{v^{\gamma_1} (z^2v^{\gamma_2}) ({\bf C}(z, v, p) v^{\gamma_2})^2}
{((1-z)^3-{\bf C}(z, v, p) v^{\gamma_2}(1-z)^2 )}.
\end{split}
\end{equation}
\end{theorem}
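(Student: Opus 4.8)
The plan is to decompose an arbitrary irreducible secondary structure $c\in\mathscr{C}(n)$ according to the loop incident to its rainbow $(1,n)$ and to translate each case into a generating-function identity, reading the loop weights off the parameter set $\mathcal{P}$. Irreducibility here means precisely that $c$ carries the rainbow $(1,n)$, and this chord closes exactly one of the three loop types $H$, $I$, $M$; these three cases are mutually exclusive and exhaustive, so the associated generating functions add up to ${\bf C}(z,v,p)$. In every case the rainbow contributes one factor $p$ (it is an arc) and its two endpoints contribute $z^2$.

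In the hairpin case the rainbow closes a hairpin $H$ with $\ell_H=n-2$ unpaired bases in $[2,n-1]$, subject to $n-1\ge 4$, i.e.\ $\ell_H\ge 3$. From $Q_H=v^{G_H}$ with $G_H\sim\alpha_1+\alpha_2\ell_H$, a hairpin with $\ell_H$ unpaired bases carries weight $v^{\alpha_1}(zv^{\alpha_2})^{\ell_H}$; summing over $\ell_H\ge 3$, applying the tetra-loop correction ($-z^4v^{4\alpha_2}+z^4v^{\alpha_3}$), and multiplying by $pz^2$ reproduces the first summand of \eqref{E:constructC}. In the interior-loop case the rainbow and a unique nested arc $(i_1,j_1)$ bound an interior loop $I$ with unpaired stretches $[2,i_1-1]$ and $[j_1+1,n-1]$, while the substructure on $[i_1,j_1]$ has $(i_1,j_1)$ as its rainbow and hence is an arbitrary $\mathcal{C}$-structure counted by ${\bf C}(z,v,p)$. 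From $G_I\sim\beta_1+\beta_2\ell_I$ the helix bonus yields $v^{\beta_1}$ and each unpaired stretch yields $\sum_{\ell\ge 0}(zv^{\beta_2})^{\ell}=(1-zv^{\beta_2})^{-1}$ (the {\sf helix} being the empty-stretch case, already included), so this case contributes $p\,z^2v^{\beta_1}(1-zv^{\beta_2})^{-2}\,{\bf C}(z,v,p)$, the second summand.

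In the multi-loop case the rainbow together with $k\ge 2$ nested arcs $(i_1,j_1),\dots,(i_k,j_k)$ bounds a multi-loop $M$: each branch $[i_m,j_m]$ is again an arbitrary $\mathcal{C}$-structure, there are $k+1$ interspersed unpaired stretches, and $M$ has $B=k+1$ base pairs (the closing rainbow and the $k$ branch roots). Since $G_M\sim\gamma_1+B\gamma_2$ and unpaired bases inside $M$ have weight $v^{0}=1$, a multi-loop with exactly $k$ branches contributes $p\,v^{\gamma_1}\,(z^2v^{\gamma_2})\,(1-z)^{-(k+1)}\,\bigl({\bf C}(z,v,p)\,v^{\gamma_2}\bigr)^{k}$, one factor $v^{\gamma_2}$ being attached to the closing pair and one to each branch root. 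Summing the geometric series over $k\ge 2$,
\begin{equation*}
p\,v^{\gamma_1}(z^2v^{\gamma_2})\sum_{k\ge 2}\frac{\bigl({\bf C}(z,v,p)\,v^{\gamma_2}\bigr)^{k}}{(1-z)^{k+1}}
=p\,\frac{v^{\gamma_1}(z^2v^{\gamma_2})\bigl({\bf C}(z,v,p)\,v^{\gamma_2}\bigr)^{2}}{(1-z)^{3}-{\bf C}(z,v,p)\,v^{\gamma_2}(1-z)^{2}},
\end{equation*}
the third summand. Adding the three cases gives \eqref{E:constructC}; the right-hand side expresses $[z^n]{\bf C}$ through coefficients of degree smaller than $n$, so it indeed determines ${\bf C}$ uniquely as a formal power series.

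I expect the multi-loop bookkeeping to be the main obstacle: one has to attach $v^{\gamma_2}$ to each of the $k+1$ base pairs of $M$, give the $k+1$ unpaired stretches their trivial weight $v^{0}=1$, and then resum the series in $k$ into the compact rational expression above. The hairpin tetra-loop correction and the convention that a {\sf helix} is a degenerate interior loop are the other two spots where one must be careful to match \eqref{E:constructC} term by term.
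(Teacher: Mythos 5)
Your decomposition of an irreducible structure by the type of loop closed by the rainbow, with geometric series for the unpaired stretches, the two interior-loop gaps, and the $k\ge 2$ multiloop branches with their $k+1$ gaps and $B=k+1$ factors of $v^{\gamma_2}$, is correct and reproduces \eqref{E:constructC} term by term. This is precisely the symbolic-methods argument the paper declares "straightforward" and omits (and which it partially re-records as the series expansion \eqref{E:pC} in the proof of Lemma~\ref{L:Cunique}), so your proof matches the intended approach.
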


The proof is a straightforward exercise in symbolic methods and will
consequently be omitted.

We shall pass from the trivariate generating function to univariate
ones by specifying the indeterminants $v$ and $p$ as follows:
since there are a total of four nucleotides ${\bf A}$, ${\bf G}$, ${\bf U}$
and ${\bf C}$ and only six of base pairs, namely ${\bf GC}$,
${\bf CG}$, ${\bf AU}$, ${\bf UA}$, ${\bf UG}$ and ${\bf GU}$,
we set $p=\frac{6}{16}$. This choice reflects the probability of randomly
forming a (valid) base pair. We proceed similarly for the indeterminant $v$
setting $v=e^{\frac{1}{RT}}\approx 1.843868184$.

These two choices induce the
univariate generating functions
${\bf C}^{*}(z)=\sum_{n \geq 0} {\bf c}^{*}(n)z^n$,
${\bf S}^{*}(z)=\sum_{n \geq 0} {\bf s}^{*}(n)z^n$ and the sets of
$\mathcal{S}^{*}$ and $\mathcal{C}^{*}$, which are weighted set
of $\mathcal{S}$ and $\mathcal{C}$, separately. Furthermore,
${\bf c}^{*}(n)$ or ${\bf s}^{*}(n)$ denote the summation of energy weight
of all the $\mathcal{C}^{*}$-structures or $\mathcal{S}^{*}$-structures with length $n$. \\
We have
${\bf c}^{*}(n)=0$, ${\bf s}^{*}(n)=0$, for $n=0, \ldots, 4$; and
${\bf c}^{*}(n)>0$, ${\bf s}^{*}(n)>0$, for $n \geq 5$, see Claim $0$ of
Lemma~\ref{L:Cunique}.

Symbolic methods immediately imply
\begin{theorem}
The bivariate generating function ${\bf S}^*(z)$ is
given by:
\begin{equation}
{\bf S}^*(z)=\frac{1}{1-(z+{\bf C}^*(z))}
\end{equation}
and furthermore,
${\bf S}^*(z,t)$ is
\begin{equation}\label{E:labelC}
{\bf S}^*(z,t)=\frac{1}{1-(z+t\,{\bf C}^*(z))}.
\end{equation}
\end{theorem}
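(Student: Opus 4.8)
The plan is to derive both identities from the unique ``sequence of irreducibles'' decomposition of a secondary structure, obtained by scanning the backbone from left to right. Let $s\in\mathcal{S}$ be a structure on $n\ge 1$ vertices and consider its first vertex. If it is unpaired, deleting it leaves an arbitrary structure on $n-1$ vertices and contributes a factor $z$. If it is paired to some vertex $k$, the non-crossing condition forces every arc of $s$ to lie entirely in $[1,k]$ or entirely in $[k+1,n]$: an arc with one endpoint in $\{2,\dots,k-1\}$ and the other in $\{k+1,\dots,n\}$ would cross $(1,k)$, and no arc can share an endpoint with $(1,k)$. Hence the restriction of $s$ to $[1,k]$ is a secondary structure carrying the rainbow $(1,k)$ -- an element $c\in\mathcal{C}$ -- and the restriction to $[k+1,n]$ is an arbitrary element of $\mathcal{S}$. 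This yields the unambiguous combinatorial decomposition in which a secondary structure is an ordered sequence of atoms, each atom being either a single unpaired vertex or an irreducible block; that is, $\mathcal{S}$ is a sequence over $\{z\}\cup\mathcal{C}$, the two cases being mutually exclusive and $k$ being determined by $s$.

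I would then verify that the weight $v^{G_s}p^{\#\mathrm{arcs}(s)}$ is multiplicative along this decomposition. Additivity in $\#\mathrm{arcs}$ is immediate. For the energy, $G_s=\sum_{L\in s}G_L$, and every loop $L$ (of type $H$, $I$, or $M$) is closed by a base pair whose nested descendants lie inside the same block; since no base pair straddles a top-level cut $k\mid k+1$ -- by the same non-crossing argument -- each loop lies entirely within one block, while the region exterior to all base pairs is not a loop of type $H$, $I$, or $M$ and hence contributes $0$ to $G_s$ under this model. Thus $G_s$ is the sum of the free energies of the irreducible blocks, so the weight of $s$ factors as $z$ (per top-level unpaired vertex) times the weights of the blocks. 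Translating into generating functions gives the functional equation
\begin{equation*}
{\bf S}(z,v,p)=1+z\,{\bf S}(z,v,p)+{\bf C}(z,v,p)\,{\bf S}(z,v,p),
\end{equation*}
and specializing $p=\tfrac{6}{16}$ and $v=e^{1/RT}$ produces ${\bf S}^*(z)=\dfrac{1}{1-(z+{\bf C}^*(z))}$; this is a legitimate formal power series since ${\bf C}^*(0)=0$ (indeed ${\bf c}^*(n)=0$ for $n\le 4$).

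For the refined statement I would decorate the same sequence decomposition with a variable $t$ recording the number of irreducible blocks: each occurrence of a factor from $\mathcal{C}$ is marked by one $t$, while the empty structure and the unpaired-vertex atoms are unmarked. The functional equation becomes ${\bf S}^*(z,t)=1+z\,{\bf S}^*(z,t)+t\,{\bf C}^*(z)\,{\bf S}^*(z,t)$, whence ${\bf S}^*(z,t)=\dfrac{1}{1-(z+t\,{\bf C}^*(z))}$, and $t=1$ recovers the first formula.

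The only point requiring genuine care -- and the one I would write out in full -- is the additivity of $G_s$ across the top-level cuts: one must check both that no loop straddles such a cut and that the exterior region carries zero energy in the present seven-parameter model. Everything else is the standard symbolic-method translation of a sequence construction, which is why the authors can relegate it to a one-line remark.
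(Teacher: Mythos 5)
Your proof is correct and is precisely the standard sequence-of-irreducibles decomposition that the paper invokes implicitly when it says ``symbolic methods immediately imply'' the result; you have simply written out the details (non-crossing forces the top-level blocks, weight multiplicativity, the marking by $t$) that the authors omit. No discrepancy with the paper's (unstated) argument.
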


The key for the following analysis is to study the dominant singularities
of ${\bf C}^{*}(z)$ and ${\bf S}^{*}(z)$. It is their relative location and
behavior that is responsible for the observed limit distributions as well
as the effect of sparsification.

We begin our analysis by making the following observations.
Theorem~\ref{T:Crecursion} implies
\begin{equation}\label{E:Ceq}
w^{*}_2(z){\bf C}^{*}(z)^2+ w^{*}_1(z){\bf C}^{*}(z)+w^{*}_0(z)=0,
\end{equation}
where
\begin{equation*}
\begin{split}
w^{*}_2(z)&=(16v^{\gamma_2}(1-z)^2+
6z^2v^{\gamma_1+3\gamma_2})(1-zv^{\alpha_2})(1-zv^{\beta_2})^2\\
&-6z^2v^{\beta_1+\gamma_2}(1-z)^2(1-zv^{\alpha_2})\\
w^{*}_1(z)&=6z^2v^{\beta_1}(1-z)^3(1-zv^{\alpha_2})
-6v^{\alpha_1+\gamma_2+3\alpha_2}z^5(1-zv^{\beta_2})^2(1-z)^2
+6z^6v^{\alpha_1+\gamma_2}\\
&(v^{4\alpha_2}-v^{\alpha_3})(1-z)^2\,(1-zv^{\alpha_2})(1-zv^{\beta_2})^2
-16(1-z)^3(1-zv^{\alpha_2})(1-zv^{\beta_2})^2\\
w^{*}_0(z)&=6v^{\alpha_1}{z^5}(1-z)^3(1-zv^{\beta_2})^2
\left(v^{3\alpha_2}-z(v^{4\alpha_2}-v^{\alpha_3})(1-zv^{\alpha_2})\right).
\end{split}
\end{equation*}
We observe that only one solution of eq.~(\ref{E:Ceq}) has the property
${\bf c}^{*}(n)>0$, namely
\begin{equation}\label{E:Csolution}
{\bf C}^{*}(z)=\frac{N^{*}(z)}{D^{*}(z)},
\end{equation}
where $N^{*}(z)=-w_1^{*}(z)
+ \sqrt{w^{*}_1(z)^2-4w_2^{*}(z)w_0^{*}(z)}$ and
$D^{*}(z)=2w_2^{*}(z)$.

Let $\rho_{c^{*}}$ denote the radius of convergence of
${\bf C}^{*}(z)$ and $\rho_{s^{*}}$ denote the radius of convergence of
${\bf S}^{*}(z)$. Then, clearly, $0<\rho_{c^{*}}<1$ and in view of
$\mathcal{C}^{*}\subset \mathcal{S}^{*}$, we have $0<\rho_{s^{*}} \leq
\rho_{c^{*}}<1$.

$\bullet$ Let furthermore $\rho_{r}$ denote the minimum positive real
root of odd order of the discriminant
$$
w^{*}_1(z)^2-4w^{*}_2(z)w^{*}_0(z)=0.
$$
$\bullet$ Let $\rho_{d}$ and $\rho_{p}$ denote the minimum positive real
      roots of the equations
\begin{equation*}
w^{*}_2(z) = 0 \quad \text{and} \quad 1-(z+{\bf C}^{*}(z))=0.
\end{equation*}

\begin{theorem}
{\bf (Pringsheim$'$s theorem)}\label{T:l}\cite{Titchmarsh}:
If $f(z)$ is representable at the origin by a series expansion that has
non-negative coefficients and radius of
convergence $r$, then the point $z = r$ is a dominant singularity of $f(z)$.
\end{theorem}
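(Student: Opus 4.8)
The plan is to prove Pringsheim's theorem by the classical contradiction argument. Write $f(z)=\sum_{n\ge 0}a_nz^n$ with $a_n\ge 0$ and radius of convergence $r$, where it is harmless to assume $0<r<\infty$ — the only case needed here, since the radii occurring below satisfy $0<\rho_{c^{*}}<1$. Recall that ``dominant singularity'' here means a singularity lying on the circle of convergence $|z|=r$; as the radius of convergence is dictated by the singularity of least modulus, it suffices to show that the particular point $z=r$ is a singularity of $f$, i.e.\ that $f$ admits no analytic continuation to a full neighbourhood of $r$. So I would suppose, for contradiction, that it does.

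Under that assumption $f$ extends holomorphically to some disc $D(r,\varepsilon)$, hence to the open set $U=\{|z|<r\}\cup D(r,\varepsilon)$ (the two descriptions agreeing on the overlap by the identity theorem). Since $U$ is open and contains $r$, the number $\delta=\operatorname{dist}(r,\mathbb{C}\setminus U)$ is positive. The next step is a short geometric one: pick a real base point $z_0=r-t$ with $0<t<\delta/2$. Because $\operatorname{dist}(\cdot,\mathbb{C}\setminus U)$ is $1$-Lipschitz, $\operatorname{dist}(z_0,\mathbb{C}\setminus U)\ge\delta-t>t$, so $f$ is holomorphic on the disc $D(z_0,R)$ with $R:=\delta-t>r-z_0$, and there its Taylor series about $z_0$ converges. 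Since $z_0+R>r$, this disc overshoots $r$ along the positive real axis, and I can fix a real $x$ with $r<x<z_0+R$ at which the Taylor series of $f$ about $z_0$ converges absolutely.

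The decisive point is the non-negativity of the coefficients. Since $0\le z_0<r$, term-by-term differentiation of the original series gives
\[
\frac{f^{(k)}(z_0)}{k!}=\sum_{n\ge k}a_n\binom{n}{k}z_0^{\,n-k}\ \ge\ 0 .
\]
Inserting this into the convergent series $\sum_k\frac{f^{(k)}(z_0)}{k!}(x-z_0)^k$ and using $x-z_0>0$ and $a_n\ge 0$, every term of the resulting double series in $(n,k)$ is non-negative, so its value is independent of the order of summation (Tonelli). Summing over $k$ first for fixed $n$ and applying the binomial theorem,
\[
\sum_{k\ge 0}\frac{f^{(k)}(z_0)}{k!}(x-z_0)^k
=\sum_{n\ge 0}a_n\bigl(z_0+(x-z_0)\bigr)^n=\sum_{n\ge 0}a_nx^n .
\]
Hence $\sum_n a_nx^n<\infty$ for a real $x>r$, which forces the radius of convergence of $\sum_n a_nz^n$ to be at least $x>r$, contradicting the hypothesis. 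Therefore $z=r$ is a singularity of $f$, and lying on $|z|=r$ it is a dominant singularity.

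The argument is elementary and I do not expect a true obstacle, only two places that want care: first, the geometric choice of $z_0$ so that its disc of holomorphy \emph{strictly} overshoots $r$ — this is exactly the step that uses the assumed analyticity of $f$ at $r$ rather than merely on $\{|z|<r\}$; and second, the interchange of the two summations, which is legitimate only because every quantity in sight is non-negative. A write-up that glosses over either is where an error would creep in.
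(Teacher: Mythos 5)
Your proof is correct: the base-point shift to $z_0=r-t$, the non-negativity of the re-expanded Taylor coefficients, and the Tonelli interchange yielding convergence of $\sum_n a_n x^n$ at some real $x>r$ constitute the standard classical argument for Pringsheim's theorem, and your handling of the two delicate points (the disc about $z_0$ strictly overshooting $r$, and the justification of the double-sum rearrangement) is sound. Note that the paper itself offers no proof of this statement --- it is quoted as a known result with a citation to Titchmarsh --- so there is nothing to compare against; your write-up simply supplies the omitted textbook argument.
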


\begin{lemma}\label{L:singular}
 For the positive dominant real singularities of ${\bf C}^*(z)$ and
${\bf S}^*(z)$,
$\rho_{c^{*}}$ and $\rho_{s^{*}}$, we distinguish the following three cases:\\
{\bf (I):} $\rho_{c^{*}}=\rho_{r}$ and $\rho_{s^{*}}=\rho_{c^{*}}$;\\
{\bf (II):} $\rho_{c^{*}}=\rho_{r}$ and $\rho_{s^{*}}=\rho_{p}$,
        where $\rho_{s^{*}}<\rho_{c^{*}}$;\\
{\bf (III):} $\rho_{c^{*}}=\rho_{r}$ and $\rho_{s^{*}}=\rho_{p}$,
        where $\rho_{s^{*}}=\rho_{c^{*}}$.
\end{lemma}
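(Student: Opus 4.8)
\emph{Step 1: $\rho_{c^{*}}=\rho_{r}$.} Since ${\bf C}^{*}$ and ${\bf S}^{*}$ are power series with non-negative coefficients, Pringsheim's Theorem~\ref{T:l} locates the dominant singularity of each on the positive real axis, so in both cases it suffices to follow the function along $[0,\infty)$; I treat ${\bf C}^{*}$ first. By Theorem~\ref{T:Crecursion}, ${\bf C}^{*}$ is the branch of the quadratic $w^{*}_2y^{2}+w^{*}_1y+w^{*}_0=0$ singled out in \eqref{E:Csolution}, and by Claim~$0$ of Lemma~\ref{L:Cunique} it vanishes at $z=0$ and has positive coefficients for $n\ge 5$, hence is real, positive and strictly increasing on $(0,\rho_{c^{*}})$. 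The only candidate singularities of such a branch are the zeros of the discriminant $\Delta^{*}(z):=w^{*}_1(z)^{2}-4w^{*}_2(z)w^{*}_0(z)$ and the zeros of the leading coefficient $w^{*}_2$. On $[0,\rho_{r})$ we have $\Delta^{*}\ge 0$: indeed $\Delta^{*}(0)=256>0$, and $\rho_{r}$ being the first positive real zero of $\Delta^{*}$ of odd order, $\Delta^{*}$ vanishes only to even order --- hence does not change sign --- on $(0,\rho_{r})$. Thus $\sqrt{\Delta^{*}}$, normalised by $\sqrt{\Delta^{*}(0)}=16$, is real and continuous there (and, generically, positive), and rationalising \eqref{E:Csolution} gives
\begin{equation*}
{\bf C}^{*}(z)=\dfrac{2w^{*}_0(z)}{-w^{*}_1(z)+\sqrt{\Delta^{*}(z)}},
\end{equation*}
a representation in which the zeros of $w^{*}_2$ no longer appear. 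Hence the only way ${\bf C}^{*}$ can be singular in $[0,\rho_{r})$ is through a zero of the denominator, i.e.\ a point where $\sqrt{\Delta^{*}}=w^{*}_1$, which forces $w^{*}_1\ge 0$; I would rule this out by the sign estimate $w^{*}_1(z)<0$ on $[0,\rho_{r}]$, read off from the closed form of $w^{*}_1$ --- in which $-16(1-z)^{3}(1-zv^{\alpha_2})(1-zv^{\beta_2})^{2}$ is the dominant term --- using $v>1$ and the admissible ranges of $\mathcal{P}$. This yields $\rho_{c^{*}}\ge\rho_{r}$. Conversely, at $z=\rho_{r}$ the radical $\sqrt{\Delta^{*}}$ has a genuine odd-order branch point which is not cancelled (the denominator tends to $-w^{*}_1(\rho_{r})>0$, and $w^{*}_0(\rho_{r})\ne 0$ since ${\bf C}^{*}$ is positive and increasing on $(0,\rho_{r})$), so ${\bf C}^{*}$ has a finite limit but fails to be analytic at $\rho_{r}$, whence $\rho_{c^{*}}\le\rho_{r}$. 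Therefore $\rho_{c^{*}}=\rho_{r}$, and ${\bf C}^{*}$ extends continuously to $\rho_{c^{*}}$ with a square-root singularity. (In particular, the leading-coefficient root $\rho_{d}$ never produces the dominant singularity, which is why it does not occur in the conclusion.)

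\emph{Step 2: the trichotomy for $\rho_{s^{*}}$.} Put $h(z)=1-\bigl(z+{\bf C}^{*}(z)\bigr)$, so ${\bf S}^{*}(z)=1/h(z)$. By Step~1, $h$ is continuous on $[0,\rho_{c^{*}}]$ with $h(0)=1$, and it is strictly decreasing there because $z+{\bf C}^{*}(z)$ has strictly positive derivative (the ${\bf c}^{*}(n)$ are non-negative and not all zero). The singularities of ${\bf S}^{*}$ in $[0,\rho_{c^{*}}]$ are the zeros of $h$, together with the branch point of ${\bf C}^{*}$ at $\rho_{c^{*}}$, the latter surviving in $1/h$ precisely when $h(\rho_{c^{*}})\ne 0$. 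Monotonicity of $h$ now forces an exhaustive trichotomy according to the sign of $h(\rho_{c^{*}})$: if $h(\rho_{c^{*}})>0$, then $h>0$ on all of $[0,\rho_{c^{*}}]$, so $1/h$ is analytic on $[0,\rho_{c^{*}})$ and inherits the square-root singularity of ${\bf C}^{*}$, giving $\rho_{s^{*}}=\rho_{c^{*}}$ --- case~{\bf (I)}; if $h(\rho_{c^{*}})<0$, then $h$ has a unique zero $\rho_{p}\in(0,\rho_{c^{*}})$ where ${\bf S}^{*}$ has a pole, and this is the first singularity on the positive axis, giving $\rho_{s^{*}}=\rho_{p}<\rho_{c^{*}}$ --- case~{\bf (II)}; if $h(\rho_{c^{*}})=0$, then $\rho_{p}=\rho_{c^{*}}$ and ${\bf S}^{*}$ is singular exactly at $\rho_{c^{*}}=\rho_{p}$ --- case~{\bf (III)}. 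Combined with $\rho_{c^{*}}=\rho_{r}$ from Step~1, this is the asserted classification.

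\emph{Main obstacle.} The genuinely non-routine point is Step~1's claim that the dominant singularity of ${\bf C}^{*}$ is the branch point $\rho_{r}$ and not a pole at a zero of $w^{*}_2$; concretely this rests on the sign estimate $w^{*}_1(z)<0$ on $[0,\rho_{r}]$ (together with $\sqrt{\Delta^{*}}>0$ there, which is automatic once $\Delta^{*}$ is known to be zero-free on $(0,\rho_{r})$), and it is precisely here that the numerical value of $v$ and the constraints on $\mathcal{P}$ are used. Everything else is bookkeeping, including the treatment of the non-generic coincidences ($\rho_{r}=\rho_{d}$, $w^{*}_1(\rho_{r})=0$, or even-order zeros of $\Delta^{*}$ inside $(0,\rho_{r})$), which do not affect the three-way classification.
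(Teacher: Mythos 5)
Your overall architecture coincides with the paper's: Pringsheim reduces everything to the positive real axis, the candidate singularities of ${\bf C}^{*}$ are the zeros of the discriminant and of $w^{*}_2$, and once $\rho_{c^{*}}=\rho_{r}$ is in hand the trichotomy for $\rho_{s^{*}}$ follows exactly as in your Step~2, which is the paper's own concluding case split phrased via the monotonicity of $z+{\bf C}^{*}(z)$. The problem is the crux of Step~1, which you yourself flag as the main obstacle: you reduce the exclusion of a pole at a zero $\rho_{d}$ of $w^{*}_2$ to the sign estimate $w^{*}_1(z)<0$ on $[0,\rho_{r}]$, and you do not prove it. This cannot be deferred as routine bookkeeping. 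The polynomial $w^{*}_1$ contains the strictly positive term $6z^{2}v^{\beta_1}(1-z)^{3}(1-zv^{\alpha_2})$, so global negativity is a genuine quantitative claim about the admissible parameters; the paper never establishes (nor needs) it, and in fact the paper's Claim~$1$ concludes ${\sf csgn}(w_1^{*}(\rho_{d}))=+1$, i.e.\ $w_1^{*}(\rho_{d})>0$, whenever $\rho_{d}<\rho_{r}$ --- the opposite sign at exactly the point where your argument requires $w^{*}_1<0$. As written, Step~1 therefore has a hole at its only non-trivial step, and the proposed route to fill it looks unlikely to succeed.

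The paper closes this hole by a different mechanism, which you could adopt. First it proves $w^{*}_0(z)>0$ for $0<z<1$ directly from the parameter signs ($\alpha_2<0$, $\alpha_3>0$, $v>1$); this yields both $\rho_{d}\neq\rho_{r}$ (if they coincided, the quadratic \eqref{E:Ceq} would force $w^{*}_1(\rho_{d})=0$ and then $w^{*}_0(\rho_{d})=0$) and the non-vanishing of the numerator $2w^{*}_0$ in your rationalised form. Second, to rule out a pole of ${\bf C}^{*}$ at $\rho_{d}$, it does not estimate $w^{*}_1$ at all: it uses the relation ${\bf S}^{*}(z)^{-1}=1-\bigl(z+{\bf C}^{*}(z)\bigr)$ to observe that a pole of ${\bf C}^{*}$ at $\rho_{d}$ would force ${\bf S}^{*}(z)\to 0$, which is impossible for a not-identically-zero power series with non-negative coefficients approached along the positive axis. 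Hence $N^{*}(\rho_{d})=0$ along with $D^{*}(\rho_{d})=0$, and rationalisation gives $\lim_{z\to\rho_{d}}{\bf C}^{*}(z)=-w^{*}_0(\rho_{d})/w^{*}_1(\rho_{d})$, finite and nonzero, so $\rho_{d}$ is removable; a short separate argument (the paper's Claim~$2$, via Pringsheim again) disposes of the non-real points on $|z|=\rho_{d}$. Your Step~2 is correct and essentially identical to the paper's.
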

\begin{proof}
According to Theorem~\ref{T:l}, we have $\rho_{c^*}=\min\{\rho_{r}, \rho_{d}\}$.

{\it Claim $0$:} We have $\rho_{d}\neq \rho_{r}$. \\
To prove Claim $0$.
We begin by observing that $v^{4\alpha_2}-v^{\alpha_3}<0$,
since $\alpha_2<0$, $\alpha_3>0$ and $v>1$,
then for arbitrary real $z$, $0<z<1$,
\begin{eqnarray}
w^{*}_0(z)&=&6v^{\alpha_1}z^5 \underbrace{(1-z)^3(1-z v^{\beta_2})^2}_{>0}
(v^{3\alpha_2}-z\underbrace{(v^{4\alpha_2}-v^{\alpha_3})}_{<0}
\underbrace{(1-z v^{\alpha_2})}_{>0})>0. \nonumber
\end{eqnarray}
Suppose $\rho_{d} = \rho_{r}$. Then, in view of
eq.~(\ref{E:Csolution})
$$
w^{*}_2(\rho_{d})=0\quad\text{\rm and}\quad
w^{*}_1(\rho_{d})^2-4w^{*}_2(\rho_{d})w^{*}_0(\rho_{d})=0
$$
and consequently $w^{*}_1(\rho_{d})=0$. But in this case
eq.~(\ref{E:Ceq}) implies $w^{*}_0(\rho_{d})=0$,
which conflict with $w^{*}_0(\rho_{d})>0$ and Claim $0$ is proved.

First, we discuss the case $\rho_{d}<\rho_{r}$.\\
{\it Claim $1$:} $\rho_{d}$ is a removable singularity of ${\bf C}^{*}(z)$. \\
To prove {\it Claim $1$}. Suppose first $N^{*}(\rho_{d}) \neq 0$,
since $D^{*}(\rho_{d}) = 0$, thus
$\lim_{z\rightarrow \rho_{d}} {\bf C}^{*}(z)=\infty$, so $\rho_{d}$ is
accordingly a pole of ${\bf C}^{*}(z)$.\\
In view of
\begin{equation*}\label{E:Ssolution}
{\bf S}^{*}(z)^{-1}=1-\left(z+\frac{N^{*}(\rho_{d})}{D^{*}(\rho_{d})}\right),
\end{equation*}
we have $\lim_{z\rightarrow \rho_{d}} {\bf S}^{*}(z)=0$, this is impossible,
since ${\bf S}^{*}(z)=\sum_{n \geq 0} {\bf s}^{*}(n)z^n$, the only value
of $\rho_{d}$ to make $\lim_{z\rightarrow \rho_{d}} {\bf S}^{*}(z)=0$
is $\rho_{d}=0$.\\
Thus $N^{*}(\rho_{d})= 0$ holds,
we compute using $D^{*}(\rho_{d})=2w_2^{*}(\rho_{d})=0$,
\begin{eqnarray*}
N^{*}(\rho_{d})&=&-w_1^{*}(\rho_{d})
+ \sqrt{w^{*}_1(\rho_{d})^2-4w_2^{*}(\rho_{d})w_0^{*}(\rho_{d})}\nonumber \\
&=& -w_1^{*}(\rho_{d})+ {\sf csgn}(w_1^{*}(\rho_{d}))\cdot (w_1^{*}(\rho_{d})) = 0.
\end{eqnarray*}
Consequently, ${\sf csgn}(w_1^{*}(\rho_{d}))$ is $+1$ and we have
\begin{eqnarray*}
&&\lim_{z \rightarrow \rho_{d}} {\bf C}^{*}(z)\\
&= & \lim_{z \rightarrow \rho_{d}} \frac{(-w^{*}_1(z)+\sqrt{w^{*}_1(z)^2-4w^{*}_2(z)
w^{*}_0(z)}) (-w^{*}_1(z)-\sqrt{w^{*}_1(z)^2-4w^{*}_2(z)
w^{*}_0(z)})}{2w^{*}_2(z)(-w^{*}_1(z)-\sqrt{w^{*}_1(z)^2-4w^{*}_2(z)
w^{*}_0(z)}) } \\
& = & \lim_{z \rightarrow \rho_{d}}
 \frac{2w^{*}_0(z)}{(-w^{*}_1(z)-\sqrt{w^{*}_1(z)^2-4w^{*}_2(z)w^{*}_0(z)}) } \\
& = &
\frac{2w^{*}_0(\rho_{d})}{(-w^{*}_1(\rho_{d})- w_1^{*}(\rho_{d}))}=-\frac{w^{*}_0(\rho_{d})}{w_1^{*}(\rho_{d})}.
\end{eqnarray*}
Since $w_2^{*}(\rho_{d})=0$ and $\rho_{d} \neq \rho_{r}$, hence
$ w_1^{*}(\rho_{d})^2-4w_0^{*}(\rho_{d})w_2^{*}(\rho_{d})\neq 0$,
whence $w_1^{*}(\rho_{d}) \neq 0$. Using our previous observation
$w_0^{*}(\rho_{d}) > 0$, we conclude
$$
\lim_{z \rightarrow \rho_{d}} {\bf C}^{*}(z)=
-\frac{w^{*}_0(\rho_{d})}{w_1^{*}(\rho_{d})}
\neq 0, \infty,
$$
which implies that $\rho_{d}$ is a removable singularity, whence {Claim $1$}.

{\it Claim $2$:} $z=\rho_{d}\,e^{i\theta}$, where $0 < \theta < 2\pi$, is not
a dominant singularity of ${\bf C}^{*}(z)$.\\
To prove {\it Claim $2$} we assume a contrario that
$z=\rho_{d}\,e^{i\theta}$, $0 < \theta < 2\pi$ is a dominant singularity of
${\bf C}^{*}(z)$. Then the convergence radius of ${\bf C}^{*}(z)$ is $\rho_{d}$
and Theorem~\ref{T:l} implies that $\rho_{d}$ is also a dominant
singularity of ${\bf C}^{*}(z)$, which contradicts {\it Claim $1$}, where
we showed that $\rho_d$ is a removable singularity.

Therefore, in case of $\rho_{d}<\rho_{r}$,  $\rho_{c^{*}}\neq \rho_{d}$.
Consequently, Claim $1$ and Claim $2$ imply $\rho_{c^*}=\rho_{r}$.

We thus have the following two scenarios for $\rho_{s^*}$:
\begin{equation}\label{E:sqrt}
\rho_{s^*}=
\begin{cases}
\rho_{r} & \text{for }\ \rho_{r}<\rho_{p}  \\
\rho_{p} & \text{for }\ \rho_{p}\le \rho_{r} .
\end{cases}
\end{equation}
and obtain the three cases:\\
{\bf (I):}  $\rho_{c^*}=\rho_{r}$ and $\rho_{s^*}=\rho_{r}$;\\
{\bf (II):}
$\rho_{c^*}=\rho_{r}$ and $\rho_{s^*}=\rho_{p}$ with $\rho_{p}<\rho_{r}$;\\
{\bf (III):}
$\rho_{c^*}=\rho_{r}$ and $\rho_{s^*}=\rho_{p}$ with $\rho_{p}=\rho_{r}$\\
and Lemma~\ref{L:singular} follows.
\end{proof}

\begin{lemma}\label{L:Cunique}
The dominant singularity of ${\bf C}^{*}(z)$, $\rho_{c^*}$, is unique.
\end{lemma}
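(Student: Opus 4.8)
The plan is to show that $\rho_{c^*}=\rho_r$ is the only dominant singularity of ${\bf C}^*(z)$ by exploiting the explicit radical form in eq.~(\ref{E:Csolution}). From Lemma~\ref{L:singular} we already know $\rho_{c^*}=\rho_r$, i.e.\ the dominant singularity on the positive real axis comes from the vanishing of the discriminant $\Delta(z):=w_1^*(z)^2-4w_2^*(z)w_0^*(z)$ at an odd-order root, and not from a zero of $D^*(z)=2w_2^*(z)$. So the task is to rule out any other singularity $z_0$ with $|z_0|=\rho_{c^*}$. A singularity of ${\bf C}^*(z)=N^*(z)/D^*(z)$ on the circle $|z|=\rho_{c^*}$ must be either (a) a zero of $\Delta(z)$ of odd order (a genuine branch point of the square root), or (b) a pole, i.e.\ a zero of $w_2^*(z)$ at which $N^*$ does not vanish. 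I will treat these two possibilities separately.

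First I would dispose of case (b). The argument is identical in spirit to Claims~1 and~2 in the proof of Lemma~\ref{L:singular}: if $w_2^*(z_0)=0$ with $|z_0|=\rho_{c^*}=\rho_r$, then since $\rho_r$ is a root of $\Delta$ but (by Claim~0) $\rho_d\ne\rho_r$, one needs a separate handling. The cleanest route is to observe that if ${\bf C}^*$ had a pole on $|z|=\rho_{c^*}$, then $\rho_{c^*}$ would be the radius of convergence and Pringsheim (Theorem~\ref{T:l}) forces $\rho_{c^*}$ itself to be a singularity of the same analytic nature; but we have identified $\rho_{c^*}=\rho_r$ as a branch point coming from $\sqrt{\Delta}$ with $\Delta(\rho_r)=0$ of odd order, and near $\rho_r$ the function $w_2^*$ is either nonzero (so no pole) or, if $w_2^*(\rho_r)=0$, the cancellation argument from Claim~1 shows the apparent pole is in fact removable or a branch point, not a pole. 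Hence no pole lies on the critical circle.

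The heart of the matter is case (a): showing that $\Delta(z)$ has no zero of odd order on $|z|=\rho_r$ other than $z=\rho_r$. I expect this to be the main obstacle. The standard tool is the Daffodil/aperiodicity lemma from analytic combinatorics (Flajolet–Sedgewick IV.6): since ${\bf C}^*(z)=\sum {\bf c}^*(n)z^n$ has non-negative coefficients and, by Claim~0 of this very lemma (i.e.\ the positivity statement ${\bf c}^*(n)>0$ for all $n\ge 5$), the support of the coefficient sequence is not contained in any arithmetic progression $d\mathbb{Z}+r$ with $d\ge 2$ — the consecutive indices $5,6,7,\dots$ all appear — the function is \emph{aperiodic}. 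The aperiodicity (Daffodil) lemma then yields that on the circle $|z|=\rho_{c^*}$ the only point where $|{\bf C}^*(z)|$ attains its limsup growth, equivalently the only singularity on that circle, is the positive real point $z=\rho_{c^*}$. I would phrase the final write-up as: (i) note ${\bf c}^*(n)>0$ for every $n\ge 5$ (forward reference to Claim~0 below, or prove it directly by exhibiting a hairpin-extension that increases length by one while keeping the rainbow), (ii) conclude the period $\gcd\{n-m:{\bf c}^*(n),{\bf c}^*(m)>0\}$ equals $1$, (iii) invoke the aperiodicity lemma to conclude $\rho_{c^*}$ is the unique dominant singularity. A subtlety to watch is that Claim~0 is cited as appearing \emph{in} Lemma~\ref{L:Cunique}, so the positivity must be established first within this proof before the aperiodicity argument is run; the direct construction (append an unpaired base inside the outermost hairpin, or nest one more helix) makes ${\bf c}^*(n)>0$ for $n\ge 5$ elementary, and the rest is the cited lemma applied verbatim.
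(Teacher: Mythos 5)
Your reduction to the two possible sources of singularities of the radical form (poles of $D^*$, odd-order zeros of the discriminant $\Delta$) is reasonable, and your handling of the positivity/aperiodicity preliminaries matches the paper's own Claim~$0$. The genuine gap is in your case (a), which you yourself identify as ``the heart of the matter'': you claim that the Daffodil lemma ``yields that on the circle $|z|=\rho_{c^*}$ the only point where $|{\bf C}^*(z)|$ attains its limsup growth, \emph{equivalently} the only singularity on that circle, is the positive real point.'' That equivalence is false as stated. The Daffodil lemma gives only the strict inequality of \emph{values}, $|{\bf C}^{*}(\rho_{r}e^{i\theta})| < {\bf C}^{*}(\rho_r)$ for $0<\theta<2\pi$; it says nothing about analyticity at those points. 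A function with non-negative aperiodic coefficients can in principle have a branch point at a non-real point of its circle of convergence where its value is perfectly finite and strictly smaller in modulus than at the real point (think of how $\sqrt{1-z}$ is singular at $z=1$ while taking the unremarkable value $0$ there). So you have not actually excluded a second odd-order zero of $\Delta$ on $|z|=\rho_r$.

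The paper closes exactly this gap by a different mechanism: it recasts ${\bf C}^{*}$ as the solution of an implicit equation $w=G(z,w)$ with non-negative coefficients $g_{m,n}\ge 0$ (and $g_{0,1}=0$), invokes Hille's theorems to characterize the radius of convergence by the system $w_\rho=G(\rho,w_\rho)$, $1=G_w(\rho,w_\rho)$, and then differentiates to get $w_z=G_z/(1-G_w)$, so that ${\bf C}^{*}$ can only be singular where $G_w(z,{\bf C}^{*}(z))=1$. Only at this point does the Daffodil inequality enter: combined with the monotonicity of $G_w$ (non-negative coefficients again), it gives
$|G_w(\rho_r e^{i\theta},{\bf C}^{*}(\rho_r e^{i\theta}))|\le G_w(\rho_r,|{\bf C}^{*}(\rho_r e^{i\theta})|)<G_w(\rho_r,{\bf C}^{*}(\rho_r))=1$
for $0<\theta<2\pi$, which \emph{does} yield analyticity off the positive ray. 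Your proposal is missing this bridge from the value inequality to analyticity; without it (or without an explicit verification that $\rho_r$ is the unique root of the polynomial $\Delta$ of minimal modulus, which you do not carry out), the uniqueness claim does not follow. Your case (b) is essentially fine, though it can be done more cleanly by noting that $|{\bf C}^{*}(z)|\le{\bf C}^{*}(\rho_{c^*})<\infty$ on the closed disk rules out any pole on the boundary circle.
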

\begin{proof}
{\it Claim $0$}: ${\bf C}^{*}(z)$ is aperiodic.\\
It is clear that there always exists some irreducible secondary structure
of length $n\ge 5$.
The coefficients of ${\bf C}^*(z)$ are weighted sums of these structures and
the weights are, by construction, always strictly positive. Therefore,
${\bf C}^{*}(z)=\sum_{n \geq 0} {\bf c}^{*}(n) z^n$ has for $n\ge 5$ always
strictly positive coefficients, i.e.~${\bf c^*}(n)>0$.
Hence, there exist three indices $i<j<k$ such that
${\bf c}^{*}(i){\bf c}^{*}(j){\bf c}^{*}(k) \neq 0$ and
$\gcd(j-i, k-i)=1$, therefore ${\bf C}^{*}(z)$
is aperiodic and {\it Claim $0$} is proved.

In view of eq.~(\ref{E:constructC}), we compute
\begin{eqnarray}\label{E:pC}
& &{\bf C}(z,v,p) \nonumber\\
&=& pv^{\alpha_1}z^2\,(zv^{\alpha_2})^3\,\sum_{i \geq 0 } (zv^{\alpha_2})^i
+(v^{\alpha_3}-v^{4\alpha_2})\,z^4+ pz^2v^{\beta_1}\,{\bf C}(z,v,p)
(\sum_{j \geq 0} (zv^{\beta_2})^j)^2 \nonumber\\
& & + pv^{\gamma_1}(z^2v^{\gamma_2})({\bf C}(z,v,p)v^{\gamma_2})^2\,
(\sum_{k \geq 0} z^k) \sum_{l \geq 0} \left({\bf C}(z,v,p) v^{\gamma_2}
\sum_{t \geq 0} z^t \right)^{l}.
\end{eqnarray}
Setting $p=\frac{6}{16}$, $v=e^{\frac{1}{RT}}$ and
$w={\bf C}^{*}(z)=\sum_{n \geq 0} {\bf c^{*}}(n) z^n$ we obtain
a power series equation
\begin{equation}\label{E:Eq}
w=G(z, w),
\end{equation}
where $G(z, w)=\sum_{m,n>0}g_{m,n} z^m w^n$ and $g_{m,n} \geq 0$. Indeed,
since $\alpha_3 >0$ and $\alpha_2 <0$, we have $v^{4\alpha_2}<v^{\alpha_3}$
and $v^{\alpha_3}-v^{4\alpha_2}>0$. The other coefficients in
eq.~(\ref{L:Cunique}) are all positive, i.e.
$v^{\mathcal{P}}>0$, $\mathcal{P}=\{\alpha_1, \alpha_2, \alpha_3,
\beta_1, \beta_2,\gamma_1, \gamma_2\}$, implying $g_{m,n} \geq 0$,
in particular, $g_{0,1} =0$.

Furthermore, $G(z, w)$ is
bivariate power series which is absolutely convergent in a domain
$\mathscr{D}$, such that $|z|<R_1$, $|w|<R_2$.
According to {Theorem 9.4.4} of \cite{E.Hille}, there exist a unique
function analytic in a neighborhood $|z|<\rho$ of $z=0$, where $\rho
\leq R_1$, such that
\begin{equation*}
{\bf C}^{*}(0)=0 \quad \text{and} \quad
G(z, {\bf C}^{*}(z))-{\bf C}^{*}(z)=0, \quad \text{for} \quad |z|<\rho,
\quad  {\bf C}^{*}(z)<R_2.
\end{equation*}
Furthermore {Theorem 9.4.6} of \cite{E.Hille} shows that the radius of
convergence, $\rho=\rho_{c^*}$, of the solution of eq.~(\ref{E:Eq}),
${\bf C}^{*}(z)$, and the value $w_{\rho}=\lim_{z\rightarrow \rho}{\bf C}^{*}(z)$
satisfy the equations
\begin{equation*}
w_{\rho}=G(\rho, w_{\rho}) \quad \text{and} \quad 1=G_{w_{\rho}}(\rho, w_{\rho}).
\end{equation*}
According to {Lemma}~\ref{L:singular}, we have $\rho_{c^{*}}=\rho_{r}$ and thus
\begin{eqnarray}
w_{\rho}=\lim_{z\rightarrow \rho}{\bf C}^{*}(z)={\bf C}^{*}(\rho_{r})=
\frac{-w^{*}_1(\rho_{r})}{w^{*}_2(\rho_r)}\neq 0, +\infty.
\end{eqnarray}

We next show that ${\bf C}^{*}(z)$ has no other dominant singularities than
$\rho_{r}$.

To this end we note that ${\bf C}^{*}(z)$ converges at point $z=\rho_{r}$.
Since ${\bf c^{*}}(n) \geq 0$, applying the triangular inequality, we have
${\bf C}^{*}(\rho_r \, e^{i \theta}) \leq {\bf C}^{*}(\rho_{r})$ Therefore
${\bf C}^{*}(z)$ converges on the whole circle $|z|=\rho_{r}$.
Since ${\bf C}^{*}(z)$ is aperiodic and ${\bf C}^{*}(z)=\sum_{n \geq 0}
{\bf c}^{*}(n)z^n$ is convergent power series for any $z$ with
$|z|=\rho_{r}$, the {Daffodil Lemma} of \cite{Flajolet:07} implies
$$
|{\bf C}^{*}(\rho_{r}\,e^{i \theta})| < {\bf C}^{*}(\rho_r).
$$
Taking the derivative in eq.~(\ref{E:Eq}), we derive
$$
\frac{d}{dz}{\bf C}^{*}(z)=
\frac{d}{d {\bf C}^{*}}G(z, {\bf C}^{*}(z))\frac{d}{dz}{\bf C}^{*}(z)
+ \frac{d}{d z}G(z, {\bf C}^{*}(z)).
$$
Thus we have
\begin{equation}
w_z=\frac{G_z(z, w)}{1-G_w(z, w)},
\end{equation}
which implies that ${\bf C}^{*}(z)$ is indeed analytic as long as
$G_{w}(z, w) \neq 1$.

Since $G(z,w)$ has non-negative coefficients, it is monotonously
increasing and so is $G_w(z,w)$. Therefore, for points $\rho_{r}\,
e^{i \theta}$, where $0<\theta<2\pi$ we have
\begin{eqnarray*}
& &|G_w(\rho_{r}\,e^{i \theta}, {\bf C}^{*}(\rho_{r}\,e^{i \theta}))|
\leq |G_w(|\rho_{r}\,e^{i \theta}|, |{\bf C}^{*}(\rho_{r}\,e^{i \theta})|)|
<|G_w(\rho_{r}, {\bf C}^{*}(\rho_{r}))| =1,
\end{eqnarray*}
which implies that ${\bf C}^{*}(z)$ is analytic on the whole circle
$|z|=\rho_{r}$ except of $\rho_{r}$. This proves that
$\rho_{c^*}=\rho_r$ is unique.
\end{proof}

\begin{lemma}\label{L:singular-S}
The dominant singularity of ${\bf S}^{*}(z)$ is unique and there are
the following three cases:\\
{\bf (I):} If $\rho_{s^*}=\rho_{c^*}=\rho_{r}$,
           then $\rho_{r}$ is the unique dominant singularity of
           ${\bf S}^{*}(z)$;\\
{\bf (II):} If $\rho_{s^*}=\rho_{p}<\rho_{r}$, then
           $\rho_{p}$ is the unique dominant singularity of
           ${\bf S}^{*}(z)$;\\
{\bf (III):} If $\rho_{s^*}=\rho_{p}=\rho_{r}$, then
           $\rho_{p}$ is the unique dominant singularity of
           ${\bf S}^{*}(z)$;\\
Furthermore, in case of {\bf (II)} and {\bf (III)}, the degree of
the root, $\rho_{p}$ of the equation $1-(z+{\bf C}^{*}(z))=0$ is
exactly $1$.
\end{lemma}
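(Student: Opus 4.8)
The plan is to read off the singularities of ${\bf S}^{*}(z)$ directly from the identity ${\bf S}^{*}(z)=\bigl(1-z-{\bf C}^{*}(z)\bigr)^{-1}$. A point can only be singular for ${\bf S}^{*}$ if it is singular for ${\bf C}^{*}$ or if it is a zero of $\phi(z):=1-z-{\bf C}^{*}(z)$; by Lemma~\ref{L:Cunique} the only dominant singularity of ${\bf C}^{*}$ is $\rho_{r}=\rho_{c^{*}}$, and by definition the minimal positive real zero of $\phi$ is $\rho_{p}$. Two elementary facts will drive the argument. First, $g(r):=r+{\bf C}^{*}(r)$ is continuous and \emph{strictly} increasing on $[0,\rho_{r}]$ with $g(0)=0$, since ${\bf c}^{*}(n)\ge 0$ with ${\bf c}^{*}(n)>0$ for $n\ge 5$ (Claim $0$ of Lemma~\ref{L:Cunique}) and, by the proof of Lemma~\ref{L:Cunique}, ${\bf C}^{*}$ still converges, with finite value, on $|z|=\rho_{r}$. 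Second, for $|z|=r\le \rho_{r}$ and $z\ne r$ the triangle inequality together with the aperiodicity of ${\bf C}^{*}$ and the Daffodil Lemma of \cite{Flajolet:07} give the \emph{strict} bound $|z+{\bf C}^{*}(z)|\le r+|{\bf C}^{*}(z)|<r+{\bf C}^{*}(r)=g(r)$. Finally, in each of the three cases the claimed point $\rho_{s^{*}}$ is the radius of convergence of ${\bf S}^{*}$ and hence, by Pringsheim (Theorem~\ref{T:l}), already a dominant singularity, so it suffices to exhibit ${\bf S}^{*}$ as analytic on $|z|\le\rho_{s^{*}}$ away from that one point.

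Cases \textbf{(I)} and \textbf{(II)} are then routine. In case \textbf{(I)}, Lemma~\ref{L:singular} gives $\rho_{r}<\rho_{p}$, hence $g(\rho_{r})<1$ (as $g$ is increasing and $g(z)=1$ has smallest positive real solution $\rho_{p}>\rho_{r}$); the two bounds above then yield $|z+{\bf C}^{*}(z)|<1$, i.e. $\phi(z)\ne 0$, for every $z$ with $|z|\le\rho_{r}$ and $z\ne\rho_{r}$, so the only singularity of ${\bf S}^{*}$ on $|z|\le\rho_{r}$ is the branch point at $z=\rho_{r}$ inherited from ${\bf C}^{*}$. In case \textbf{(II)}, $\rho_{p}<\rho_{r}$, so ${\bf C}^{*}$ is analytic on $|z|\le\rho_{p}$; from $g(\rho_{p})=1$ and strict monotonicity we get $\phi(z)\ne 0$ for $|z|<\rho_{p}$, and the Daffodil bound gives $\phi(z)\ne 0$ for $|z|=\rho_{p}$, $z\ne\rho_{p}$, while $\phi(\rho_{p})=0$ produces a pole, so $\rho_{p}$ is the unique dominant singularity. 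For the degree, $\phi$ is analytic at $\rho_{p}$ and $\phi'(\rho_{p})=-1-({\bf C}^{*})'(\rho_{p})=-1-\sum_{n\ge 1}n\,{\bf c}^{*}(n)\rho_{p}^{\,n-1}<0$ because ${\bf c}^{*}(n)>0$ for $n\ge 5$; hence $\rho_{p}$ is a simple zero of $\phi$, i.e. a root of degree exactly $1$.

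Case \textbf{(III)} is where the real work lies. Here $\rho_{p}=\rho_{r}$, so $g(\rho_{r})=1$, i.e. $\rho_{r}+{\bf C}^{*}(\rho_{r})=1$ (legitimate because $\rho_{r}$ is a branch point and not a pole of ${\bf C}^{*}$, so ${\bf C}^{*}(\rho_{r})$ is finite). Exactly as before, the two bounds give $\phi(z)\ne 0$ on $|z|\le\rho_{r}$ except at $z=\rho_{r}$, where now the zero of $\phi$ and the branch point of ${\bf C}^{*}$ coincide, so $\rho_{r}=\rho_{p}$ is the unique dominant singularity of ${\bf S}^{*}$. The delicate point is the degree claim, since ${\bf C}^{*}$ is no longer analytic at $\rho_{p}$ and "degree'' must be read in the local uniformizing variable. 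Using the explicit formula~(\ref{E:Csolution}) and the fact that $\rho_{r}$ is the minimal positive real root of \emph{odd} order of the discriminant $w_{1}^{*}(z)^{2}-4w_{2}^{*}(z)w_{0}^{*}(z)$, I would write ${\bf C}^{*}(z)=a(z)+b(z)\sqrt{1-z/\rho_{r}}$ with $a,b$ analytic near $\rho_{r}$ and $b(\rho_{r})\ne 0$ (a genuine square-root branch). Then $\phi(z)=\bigl(1-z-a(z)\bigr)-b(z)\sqrt{1-z/\rho_{r}}$; the analytic part $1-z-a(z)$ has value $1-\rho_{r}-{\bf C}^{*}(\rho_{r})=0$ at $\rho_{r}$, hence vanishes to order $\ge 1$ in $(z-\rho_{r})$, i.e. to order $\ge 2$ in $u:=\sqrt{1-z/\rho_{r}}$, whereas $-b(z)\sqrt{1-z/\rho_{r}}=-b(\rho_{r})u+O(u^{2})$ vanishes to order exactly $1$ in $u$. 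Therefore $\phi=-b(\rho_{r})u+O(u^{2})$ has a zero of order exactly $1$ in the uniformizer $u$, which is the degree-$1$ assertion; equivalently, clearing the radical, $\rho_{p}$ is a simple root of the polynomial $(1-z)^{2}w_{2}^{*}(z)+(1-z)w_{1}^{*}(z)+w_{0}^{*}(z)$.

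I expect the main obstacle to be precisely this last step: one must be sure that the square-root term of ${\bf C}^{*}$ is not cancelled by the analytic part of $\phi$ at $\rho_{r}$, which amounts to checking $b(\rho_{r})\ne 0$, i.e. that $\rho_{r}$ is genuinely a square-root branch point, a fact that should already be contained in the odd-order-root characterisation of $\rho_{r}=\rho_{c^{*}}$. As a by-product this analysis shows ${\bf S}^{*}(z)\sim -b(\rho_{r})^{-1}(1-z/\rho_{r})^{-1/2}$ near $\rho_{r}$ in case (III), a $(1/2)$-type singularity, which is presumably how this lemma feeds into the Rayleigh law at the phase transition. Everything else — cases (I), (II), and the analyticity statements in (III) — reduces to the two modulus estimates of the first paragraph and is routine.
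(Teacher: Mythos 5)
Your proof is correct and, for cases \textbf{(I)} and \textbf{(II)}, follows essentially the same route as the paper: the paper also works from ${\bf S}^{*}(z)^{-1}=1-(z+{\bf C}^{*}(z))$, applies the triangle inequality and the Daffodil Lemma to the aperiodic series ${\bf D}^{*}(z)=z+{\bf C}^{*}(z)$ to rule out singularities at $\rho_{s^{*}}e^{i\theta}$ with $0<\theta<2\pi$, and establishes simplicity of the pole in case \textbf{(II)} by computing $\frac{d}{dz}\bigl(1-(z+{\bf C}^{*}(z))\bigr)=-\bigl(1+({\bf C}^{*})'(z)\bigr)<0$. You are somewhat more thorough in case \textbf{(I)}, where the paper merely asserts that the dominant singularity of ${\bf S}^{*}$ ``comes from the circle $|z|=\rho_{r}$'' without explicitly excluding zeros of the denominator inside the closed disk; your monotonicity bound $|z+{\bf C}^{*}(z)|\le g(|z|)\le g(\rho_{r})<1$ closes that gap cleanly.

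The genuine divergence is in the degree claim for case \textbf{(III)}. The paper treats \textbf{(II)} and \textbf{(III)} uniformly with the derivative argument, which is not really meaningful at $\rho_{p}=\rho_{r}$ since $({\bf C}^{*})'$ blows up like $(z-\rho_{r})^{-1/2}$ there and ``multiple root'' of a non-analytic function is undefined. Your passage to the uniformizer $u=\sqrt{1-z/\rho_{r}}$, showing the analytic part of $\phi$ vanishes to order $\ge 2$ in $u$ while the radical part vanishes to order exactly $1$, is the right way to make the statement precise, and it delivers as a bonus the singular behaviour ${\bf S}^{*}(z)\sim c\,(1-z/\rho_{r})^{-1/2}$ that the paper later uses (without rederivation) in the critical case of Theorem~\ref{T:quotient}. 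Note that both your argument and the paper's singular expansion rest on the same unproved assumption, namely that $\rho_{r}$ is a \emph{simple} root of the discriminant (so that $b(\rho_{r})\neq 0$ and the branch point is genuinely of square-root type); the definition of $\rho_{r}$ only guarantees odd order, and you are right to flag $b(\rho_{r})\neq 0$ as the one point still requiring verification.
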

\begin{proof}
According to {Lemma}~\ref{L:singular}, we need to distinguish the
cases {\bf (I)}, {\bf (II)} and {\bf (III)}.

{\bf (I)}, here $\rho_{r}$ is the unique dominant singularity of
${\bf C}^{*}(z)$.
Then in view of eq.~(\ref{E:Ssolution}), the dominant singularity of
${\bf S}^{*}(z)$ comes from the circle $|z|=\rho_{r}$, whence $\rho_{r}$ is
also the unique dominant singularity of ${\bf S}^{*}(z)$.

{\bf (II)} in this case we have $\rho_{s^*}=\rho_{p}<\rho_{c^*}$ and
the dominant singularities of ${\bf S}^{*}(z)$ all come from the
circle $|z|=\rho_{p}$.
We have
\begin{equation*}\label{E:Ssolu}
{\bf S}^{*}(z)^{-1}=1-\left(z+{\bf C}^{*}(z)\right)
\end{equation*}
and substituting $z=\rho_{p}$, we derive
\begin{equation*}
\left(\rho_{p}+{\bf C}^{*}(\rho_{p})\right)=1.
\end{equation*}
Then ${\bf D}^{*}(z)=z+{\bf C}^{*}(z)=\sum_{n \geq 0} {\bf d^{*}}(n) z^n$
is an aperiodic power series with non-negative coefficients.
According to {Lemma}~\ref{L:singular}, we have
\begin{equation}
{\bf D}^{*}(\rho_{p})=\rho_{p}+{\bf C}^{*}(\rho_{p})=1.
\end{equation}
Since ${\bf d^{*}}(n) \geq 0$, applying the triangular inequality implies
${\bf D}^{*}(\rho_p \, e^{i \theta}) \leq {\bf D}^{*}(\rho_{p})$ and
${\bf D}^{*}(z)$ converges on the whole circle $|z|=\rho_{p}$.
Since ${\bf D}^{*}(z)$ is aperiodic the {Daffodil Lemma} of
\cite{Flajolet:07}, guarantees
$$
|{\bf D}^{*}(\rho_{p}\,e^{i \theta})| < {\bf D}^{*}(\rho_p)<1.
$$
Therefore, $|{\bf D}^{*}(\rho_{p}\,e^{i \theta})| \neq 1$, for $0<\theta<2\pi$,
whence the points on the circle $|z|=\rho_{p}$ other than the point
$\rho_{p}$, are not singularities of ${\bf S}^{*}(z)$.

{\bf (III)} here the dominant singularities of ${\bf S}^{*}(z)$ all come
from the circle $|z|=\rho_{p}=\rho_{r}$. According {\bf (I)} and {\bf (II)},
$\rho_{p}=\rho_{r}$ is an unique dominant singularity of ${\bf S}^{*}(z)$.

In case of {\bf (II)} and {\bf (III)} and in view of eq.~(\ref{E:Ssolu}),
the denominator of ${\bf S}^{*}(z)$ is
\begin{equation}\label{E:denominate}
1-(z+{\bf C}^{*}(z))
\end{equation}
and $1-(\rho_{p}+{\bf C}^{*}(\rho_{p}))=0$. Taking the derivative of
eq.~(\ref{E:denominate}), we obtain $(1+ \frac{d}{d z}{\bf C}^{*}(z))=0$.
Since ${\bf C}^{*}(z)=\sum_{n \geq 0} {\bf c^{*}}(n) z^n$, where
${\bf c^{*}}(n) > 0$, for $n\geq 5$, the derivative
$\frac{d}{d z}{\bf C}^{*}(z)=
\sum_{n \geq 1} n\,{\bf c^{*}}(n) z^{n-1}$, has also nonnegative coefficients.
As a result we have $\frac{d}{d z}{\bf C}^{*}(\rho_{p})>0$ and consequently
$(1+ \frac{d}{d z}{\bf C}^{*}(\rho_{p}))>0$. We conclude that $\rho_{p}$ is
not a multiple root of eq.~(\ref{E:denominate}), which completes the proof
of {Lemma}~\ref{L:singular-S}.
\end{proof}

\section{The main result}
We consider the general composition scheme
\begin{equation}
\mathcal{F}=\mathcal{G}\circ(u \mathcal{H})
\Longrightarrow {\bf F}(z,u)=g(uh(z)).
\end{equation}
Assume that $g$ and $h$ have non-negative coefficients and
that $h(0)=0$, so that the composition $g(h(z))$ is well-defined.
We let $\rho_g$ and $\rho_h$ denote the radii of convergence of
$g$ and $h$, and define
\begin{equation}
\tau_g=\lim_{x\rightarrow \rho_g^{-}}\, g(x) \quad and \quad
\tau_h=\lim_{x\rightarrow \rho_h^{-}}\, h(x).
\end{equation}
\begin{definition}\cite{Flajolet:07}
The composition scheme ${\bf F}(z,t)=g(th(z))$ is said to be
subcritical if $\tau_{h}<\rho_{g}$, critical if $\tau_{h}=\rho_{g}$,
and supercritical if $\tau_h>\rho_g$.
\end{definition}
We observe that
\begin{equation}\label{E:eumel}
{\bf S}^{*}(z,t)=\frac{1}{1-(z+t\,{\bf C}^{*}(z))}
=f(z)\,g(t\,h(z)),
\end{equation}
where $f(z)=\frac{1}{1-z}$, $g(w)=\frac{1}{1-w}$ and
$h(z)=\frac{{\bf C}^{*}(z)}{1-z}$. Furthermore, we set
$$
\mathbb{P}(X_n=t)={\bf s}^{*}(n,t)/{\bf s}^{*}(n).
$$
Since $0<\rho_{s^{*}}\leq \rho_{c^{*}}<1$, we restrict ourselves to
the cases $0<\rho_{d}, \rho_{r}, \rho_{p}<1$.

\begin{theorem}\label{T:quotient}
The distribution of irreducible substructures within energy-filtered
RNA secondary structures has the following distinct regimes:\\
{\bf (a) the subcritical regime:} Both dominant singularities
$z=\rho_{c^{*}}$ of ${\bf C}^{*}(z)$ and $z=\rho_{s^{*}}$ of ${\bf S}^{*}(z)$
are all exclusively a branch point (square root) singularity. Then
$\mathbb{P}(X_n=t)$ satisfies a discrete limit
law and
$$
\lim_{n \rightarrow \infty} \frac{{\bf c}^{*}(n)}{{\bf s}^{*}(n)} =\chi >0.
$$
{\bf (b) the supercritical regime:} The dominant singularity
$z=\rho_{c^{*}}$ of ${\bf C}^{*}(z)$ is exclusively branch point singularity;
the dominant singularity $z=\rho_{s^{*}}$ of ${\bf S}^{*}(z)$ is
exclusively a pole; .
Then the probability distribution of $\mathbb{P}(X_n=t)$, after
standardization, satisfies a limiting Gaussian distribution and
$$
\lim_{n \rightarrow \infty}\frac{{\bf c}^{*}(n)}{{\bf s}^{*}(n)}=0.
$$
{\bf (c) the critical regime:} The dominant singularity
$z=\rho_{c^{*}}$ of ${\bf C}^{*}(z)$ is exclusively branch point singularity,
the dominant singularity $z=\rho_{s^{*}}$ of ${\bf S}^{*}(z)$ is
simultaneously a branch point singularity and a pole. Then
$\mathbb{P}(X_n=t)$ satisfies a local limit law whose
density is a Rayleigh distribution and
$$
\lim_{n \rightarrow \infty} \frac{{\bf c}^{*}(n)}{{\bf s}^{*}(n)}\sim
\frac{\chi^{'}}{n},
$$
where $\chi'>0$.
\end{theorem}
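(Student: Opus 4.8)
The three regimes are precisely the three cases of Lemmas~\ref{L:singular} and~\ref{L:singular-S}, and the plan is to read off the stated limit laws by applying the theory of composition schemes of \cite{Flajolet:07} to the decomposition~(\ref{E:eumel}), ${\bf S}^{*}(z,t)=f(z)\,g(t\,h(z))$ with $f(z)=1/(1-z)$, $g(w)=1/(1-w)$ and $h(z)={\bf C}^{*}(z)/(1-z)$. Since $0<\rho_{s^{*}}\le\rho_{c^{*}}<1$, the factor $f$ is analytic at every singularity in play and only contributes a nonzero multiplicative constant, so it may be ignored when reading off singularity types.

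First I would pin down the local nature of ${\bf C}^{*}(z)$ at $\rho_{c^{*}}=\rho_{r}$. By~(\ref{E:Csolution}), ${\bf C}^{*}(z)=(-w^{*}_{1}(z)+\sqrt{\Delta(z)})/(2w^{*}_{2}(z))$ with $\Delta=(w^{*}_{1})^{2}-4w^{*}_{2}w^{*}_{0}$, and as in Claim~$0$ of Lemma~\ref{L:singular} one has $w^{*}_{2}(\rho_{r})\ne0$ (else $w^{*}_{1}(\rho_{r})=0$ and then $w^{*}_{0}(\rho_{r})=0$, contradicting $w^{*}_{0}(\rho_{r})>0$); hence the singularity of ${\bf C}^{*}$ at $\rho_{r}$ is that of $\sqrt{\Delta}$, and since $\rho_{r}$ is a simple zero of $\Delta$ it is a square-root branch point, with expansion ${\bf C}^{*}(z)=c_{0}-c_{1}\sqrt{1-z/\rho_{c^{*}}}+O(1-z/\rho_{c^{*}})$, $c_{1}>0$, in a suitable $\Delta$-domain, so ${\bf c}^{*}(n)\sim\frac{c_{1}}{2\sqrt{\pi}}\,\rho_{c^{*}}^{-n}n^{-3/2}$. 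In the composition scheme this gives $\rho_{g}=1$, $\rho_{h}=\rho_{c^{*}}$ and $\tau_{h}=h(\rho_{c^{*}})={\bf C}^{*}(\rho_{c^{*}})/(1-\rho_{c^{*}})<\infty$; and since $1-(z+{\bf C}^{*}(z))$ vanishes at $\rho_{p}$ exactly when $h(\rho_{p})=1$, the scheme is subcritical ($\tau_{h}<1$), supercritical ($\tau_{h}>1$) or critical ($\tau_{h}=1$) precisely in cases \textbf{(I)}, \textbf{(II)} and \textbf{(III)}. This reduces the theorem to the three standard composition-scheme theorems.

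In the subcritical regime $g$ is analytic at $t\tau_{h}$ for $t$ near $1$, so $g(t\,h(z))$ inherits the square-root singularity of $h$; substituting the expansion of $h$ gives ${\bf S}^{*}(z,t)=A(t)-B(t)\sqrt{1-z/\rho_{c^{*}}}+\cdots$ with $B(1)\ne0$, and singularity analysis yields $\mathbb{E}[t^{X_{n}}]=[z^{n}]{\bf S}^{*}(z,t)/[z^{n}]{\bf S}^{*}(z,1)\to B(t)/B(1)=t\,g'(t\tau_{h})/g'(\tau_{h})$, which for $g(w)=1/(1-w)$ is the discrete limit law $\mathbb{P}(X=k)=(1-\tau_{h})^{2}k\tau_{h}^{k-1}$, $k\ge1$; moreover $[z^{n}]{\bf C}^{*}(z)$ and $[z^{n}]{\bf S}^{*}(z)$ share the growth $\rho_{c^{*}}^{-n}$ and the factor $n^{-3/2}$, so $\chi=\lim{\bf c}^{*}(n)/{\bf s}^{*}(n)>0$. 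In the supercritical regime $1-t\,h(z)$ has a zero $\rho(t)<\rho_{c^{*}}$ near $t=1$, analytic in $t$ with $\rho(1)=\rho_{p}$ and simple by Lemma~\ref{L:singular-S}, so ${\bf S}^{*}(z,t)$ has a simple pole there, $[z^{n}]{\bf S}^{*}(z,t)\sim C(t)\rho(t)^{-n}$, and Hwang's quasi-powers theorem \cite{Flajolet:07} (once the variability condition $\frac{d^{2}}{ds^{2}}\log\rho(e^{-s})\big|_{s=0}\ne0$ is verified) gives a Gaussian limit for $(X_{n}-\mu n)/\sqrt{\sigma^{2}n}$ with $\mu=-\rho'(1)/\rho(1)>0$; since ${\bf s}^{*}(n)\asymp\rho_{p}^{-n}$ while ${\bf c}^{*}(n)\asymp\rho_{c^{*}}^{-n}n^{-3/2}$ with $\rho_{p}<\rho_{c^{*}}$, the ratio ${\bf c}^{*}(n)/{\bf s}^{*}(n)$ tends to $0$ exponentially fast.

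In the critical regime $h(\rho_{c^{*}})=1=\rho_{g}$, so the pole of $g$ collides with the branch point of $h$ at $\rho_{c^{*}}$: from $1-h(z)\sim\frac{c_{1}}{1-\rho_{c^{*}}}\sqrt{1-z/\rho_{c^{*}}}$ one gets ${\bf S}^{*}(z)\sim\mathrm{const}\cdot(1-z/\rho_{c^{*}})^{-1/2}$, hence ${\bf s}^{*}(n)\sim\mathrm{const}\cdot\rho_{c^{*}}^{-n}n^{-1/2}$, and comparing with ${\bf c}^{*}(n)$ gives ${\bf c}^{*}(n)/{\bf s}^{*}(n)\sim\chi'/n$ with $\chi'>0$. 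For the limit law I would carry out a uniform two-variable singular analysis near $(z,t)=(\rho_{c^{*}},1)$: writing $1-t\,h(z)=(1-t)+t\,\frac{c_{1}}{1-\rho_{c^{*}}}\sqrt{1-z/\rho_{c^{*}}}+\cdots$ exhibits the coalescence of the movable pole (present for $t>1$) with the fixed half-integer branch point (for $t<1$); rationalising, transferring coefficients with $t=1+\xi/\sqrt{n}$, and appealing to the continuity theorem for characteristic functions identifies $X_{n}/\sqrt{n}$ as converging to a Rayleigh law --- the critical-composition case of \cite{Flajolet:07}. The subcritical and supercritical analyses are routine; the main obstacle is the critical case, where one must build the uniform expansion capturing the confluence of the simple pole and the $z^{1/2}$-branch point and verify that the resulting local limit law is exactly Rayleigh, with the secondary technical point of confirming the variability condition in the supercritical Gaussian regime.
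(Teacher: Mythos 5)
Your proposal is correct and follows essentially the same route as the paper: the composition scheme ${\bf S}^{*}(z,t)=f(z)\,g(t\,h(z))$ with $h(z)={\bf C}^{*}(z)/(1-z)$, identification of the sub-/super-/critical regimes with cases \textbf{(I)}--\textbf{(III)} of Lemma~\ref{L:singular-S} via the position of $\tau_h$ relative to $\rho_g=1$, square-root singularity analysis of ${\bf C}^{*}$, and the standard transfer results of \cite{Flajolet:07} (discrete limit law, quasi-powers Gaussian, and the critical composition yielding the Rayleigh local law, which the paper obtains by citing Propositions IX.24 and Theorem IX.16 rather than re-deriving the uniform confluence expansion you sketch). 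Your added cautions --- verifying the variability condition in the supercritical case and that $\rho_r$ is a simple zero of the discriminant --- are refinements the paper leaves implicit, not departures from its argument.
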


\begin{proof}
To prove the theorem we shall show how the three scenarios identified
in {Lemma}~\ref{L:singular-S} give rise to the three regimes.

{\bf (a):} Let us begin with case {\bf (I)} of {Lemma}~\ref{L:singular-S},
where we have $\rho_{c^{*}}=\rho_{s^{*}}
=\rho_{r}$ and $\rho_{r}<\rho_{p}$ and both dominant singularities
$z=\rho_{c^{*}}$ of ${\bf C}^{*}(z)$ and $z=\rho_{s^{*}}$ of ${\bf S}^{*}(z)$
are all exclusively a branch point (square root) singularity.\\
We set
\begin{eqnarray*}
t_0(z) & = & (w^{*}_1(z)^2-4w_2^{*}(z)w_0^{*}(z))/(z-\rho_{r}),\\
t_1(z) & = & (-w_1^{*}(z))/(2w_2^{*}(z)),\\
t_2(z) & = & (\sqrt{t_0(z)})/(2w_2^{*}(z))
\end{eqnarray*}
and express ${\bf C}^{*}(z)$ as
\begin{equation}
\begin{split}
{\bf C}^{*}(z)=t_1(z)+t_2(z)(z-\rho_{r})^{\frac{1}{2}}.
\end{split}
\end{equation}
The singular expansion of  ${\bf C}^{*}(z)$ is obtained from
the regular expansion of $t_1(z)$ and the singular expansion of
$(z-\rho_{r})^{\frac{1}{2}}t_2(z)$. Consequently,
\begin{equation}\label{E:singu-C-st}
{\bf C}^{*}(z)=t_1(\rho_{r})+t_2(\rho_{r})(z-\rho_{r})^{\frac{1}{2}}
+O(z-\rho_{r}).
\end{equation}
In view of $O(z-\rho_{r})=o((z-\rho_{r})^{\frac{1}{2}})$,
{Theorem VI.3} \cite{Flajolet:07} implies
\begin{equation}
[z^n]{\bf C}^{*}(z) \sim t_2(\rho_{r})[z^n](z-\rho_{r})^{\frac{1}{2}}.
\end{equation}
Using {Theorem VI.1} of \cite{Flajolet:07}, we obtain
\begin{equation}
[z^n]{\bf C}^{*}(z) \sim k_1 \cdot n^{-\frac{3}{2}}
\cdot(\rho_{r})^{-n}\cdot(1+O(\frac{1}{n})), \quad \text{\rm for
some constant $k_1$}.
\end{equation}
Analogously we compute the singular expansion of ${\bf S}^{*}(z)$ as:
\begin{equation}
{\bf S}^{*}(z)=d_1+d_2(z-\rho_{r})^{\frac{1}{2}}+O(z-\rho_{r}),
\quad \text{\rm for some constants $d_1,d_2$}.
\end{equation}
Employing {Theorem VI.1} of \cite{Flajolet:07},
\begin{equation}
[z^n]{\bf S}^{*}(z) \sim k_2 \cdot n^{-\frac{3}{2}}
\cdot(\rho_{r})^{-n}(1+O(\frac{1}{n})), \quad \text{\rm for
some constant $k_2$}.
\end{equation}
Therefore,
\begin{equation}
\lim_{n\rightarrow \infty}\frac{{\bf c}^{*}(n)}{{\bf s}^{*}(n)}
\sim
\frac{ k_1\cdot n^{-\frac{3}{2}}
\cdot (\rho_{r})^{-n} }
{k_2\cdot n^{-\frac{3}{2}}
\cdot (\rho_{r})^{-n}}=\chi>0,
\end{equation}
We now have $\rho_{r}<\rho_{p}$, furthermore, we have (eq.~(\ref{E:eumel}))
with $\rho_{g}=1$ and
$$
\tau_{h}=\lim_{z \rightarrow \rho_{h}^{-}} \frac{{\bf C}^{*}(z)}{1-z}.
$$
Since $0<\rho_{p}<1$ is a root of $1-(z+{\bf C}^{*}(z))=0$, we observe
$\rho_{p}+{\bf C}^{*}(\rho_{p})=1$.
${\bf C}^{*}(z)$ is a power series with positive coefficients and thus
as a function over $[0,1[$, continuous and monotone.
As a result $\rho_{r}+{\bf C}^{*}(\rho_{r})<1$ and
$\tau_{h}=h(\rho_{r})=\frac{{\bf C}^{*}(\rho_{r})}{1-\rho_{r}}<1=\rho_{g}$,
i.e.~${\bf S}^{*}(z,t)$ is governed by the subcritical paradigm.

According to {\rm Proposition IX.1} \cite{Flajolet:07}, the
quotient of the coefficients ${\bf s}^{*}(n,k)$ and ${\bf s}^{*}(n)$ satisfies
\begin{equation}
\lim_{n\rightarrow \infty} \frac{{\bf s}^{*}(n,k)}{{\bf s}^{*}(n)}=q_k,
\ {\rm where} \ q_k=\frac{k g_k \tau_h^{k-1}}{g^{'}(\tau_h)}.
\end{equation}
Therefore the probability generating function of the limit distribution
$(q_k)$ is given by
\begin{equation}
q(t)=\frac{tg^{'}(\tau_h t)}{g^{'}(\tau_h)}
\end{equation}
and $\mathbb{P}(X_n=t)=\frac{{\bf s}^{*}(n,k)}{{\bf s}^{*}(n)}$
satisfies a discrete limit law as asserted.


Ad {\bf (b):} We have the case {\bf (II)} of {Lemma}~\ref{L:singular-S},
we have $\rho_{c^{*}}=\rho_{r}$, $\rho_{s^{*}}=\rho_{p}$ and
$\rho_{p}<\rho_{r}$,
the dominant singularity $z=\rho_{c^{*}}$ of ${\bf C}^{*}(z)$ is
exclusively branch point singularity; the dominant singularity
$z=\rho_{s^{*}}$ of ${\bf S}^{*}(z)$ is exclusively a pole with the relation.

In analogy to our arguments in {\bf (a)} we derive
\begin{eqnarray*}
{\bf s}^{*}(n) & \sim &
k_3 \cdot 1 \cdot(\rho_{s^{*}})^{-n}\cdot(1+O(\frac{1}{n})),
\quad \omega_1 \geq 1 \, \, {\rm and} \, \, \omega_1 \in \mathbb{N},  \\
{\bf c}^{*}(n) & \sim &  k_1 n^{-\frac{3}{2}}
\cdot(\rho_{c^{*}})^{-n} \cdot (1+O(\frac{1}{n})),
\end{eqnarray*}
for constants $k_3,k_1$ and clearly,
\begin{equation}
 \frac{{\bf c}^{*}(n)}{{\bf s}^{*}(n)}
\sim \frac{k_1 \cdot n^{-\frac{3}{2}} (\rho_{c^{*}})^{-n}}
{k_3\cdot 1 \cdot (\rho_{s^{*}})^{-n} }
=\frac{k_1}{k_3} \cdot n^{-\frac{3}{2}}
\left(\frac{\rho_{s^{*}}}{\rho_{c^{*}}}\right)^n \sim 0.
\end{equation}

We now have $\rho_{c^{*}}> \rho_{p}=\rho_{s^{*}}$ and since for
$0<\rho_{p}<1$ as well as $\rho_{p}+{\bf C}^{*}(\rho_{p})=1$,
analogous monotonicity arguments imply
$\rho_{c^{*}}+{\bf C}^{*}(\rho_{c^{*}})>1$.
As a result
$$
\tau_{h}=h(\rho_{c^{*}})=\frac{{\bf C}^{*}(\rho_{c^{*}})}{1-\rho_{c^{*}}}>1=\rho_{g},
$$
i.e.~${\bf S}^{*}(z,t)$ is governed by the supercritical paradigm.
According to {\rm Proposition IX.6} \cite{Flajolet:07},
$\mathbb{P}(X_n=k)={\bf s}^{*}(n,k)/{\bf s}^{*}(n)$ satisfies a Gaussian
limit law with speed of convergence $O(1/\sqrt{n})$.


Ad {\bf (c)}: We consider finally case {\bf (III)} of
{Lemma}~\ref{L:singular-S}, where we have
$\rho_{c^{*}}=\rho_{r}$ and $\rho_{s^{*}}=\rho_{p}$ with $\rho_{p}=\rho_{r}$,
the dominant singularity
$z=\rho_{c^{*}}$ of ${\bf C}^{*}(z)$ is exclusively branch point singularity,
the dominant singularity $z=\rho_{s^{*}}$ of ${\bf S}^{*}(z)$ is
simultaneously a branch point singularity and a pole.\\
Then the singular expansion of ${\bf S}^{*}(z)$ is given by:
\begin{equation}
{\bf S}^{*}(z)=d_3+d_4\, (z-\rho_{s^{*}})^{-\frac{1}{2}}
+O(z-\rho_{s^{*}})
\end{equation}
for constant $d_3$ and $d_4$.
 We compute
\begin{equation}
{\bf s}^{*}(n) \sim
k_4\cdot n^{-\frac{1}{2}} \cdot (\rho_{s^{*}})^{-n}
\cdot (1+O(\frac{1}{n}))
\end{equation}
and for constant $k_4$.
For ${\bf c}^{*}(n)$ we derive the asymptotic expression
\begin{equation}
{\bf c}^{*}(n) \sim k_1 \cdot n^{-\frac{3}{2}}
\cdot(\rho_{c^{*}})^{-n}\cdot(1+O(\frac{1}{n})).
\end{equation}
Consequently,
\begin{equation}
 \frac{{\bf c}^{*}(n)}{{\bf s}^{*}(n)}
\sim \frac{k_1 \cdot n^{-\frac{3}{2}} \cdot (\rho_{c^{*}})^{-n}}
{k_4 \cdot n^{-\frac{1}{2}} \cdot (\rho_{s^{*}})^{-n} }
=\frac{\chi^{'}}{n} \sim 0,
\,\, \chi^{'}=\frac{k_1}{k_4}.
\end{equation}

In view of $\rho_{r}=\rho_{p}$ we obtain
$\frac{{\bf C}^{*}(\rho_r)}{1-\rho_r}=1$ and
$$
\tau_{h}=h(\rho_{r})=\frac{{\bf C}^{*}(\rho_{r})}{1-\rho_{r}}=1=\rho_{g},
$$
whence ${\bf S}^{*}(z,t)$ belongs to the critical paradigm. The critical
composition is ${\bf S}^{*}(z,t)=f(z)\,g(t\,h(z))$, where $h(z)$
and $g(z)$ have singular exponent $\lambda=\frac{1}{2}$ and $\lambda^{'}=1$,
where $\lambda<\lambda^{'}$. We proceed by applying {\rm Proposition IX.24} of
\cite{Flajolet:07}, from which we can conclude that the normalized
r.v.~$X_n/\sqrt{n}$ satisfies a local limit law whose density is given by a
Rayleigh law \cite{ Banderier, Flajolet:07}. To be specific we have
\begin{equation}
\mathbb{P}(X_n=k)=\frac{[z^k]g(z)}{[z^n]g(h(z))}[z^n]h(z)^k,
\end{equation}
and the singular expansion of $g(z)$, $h(z)$ and $g(h(z))$ are respectively
given by
\begin{eqnarray*}
g(z) & = & (1-z)^{-1}, \\
h(z) & = & \tau_{h}-h_{\frac{1}{2}}(1-\frac{z}{\rho_{r}})^{\frac{1}{2}}+
         O(1-\frac{z}{\rho_{r}}),\\
g(h(z)) & = & m_0-m_{-\frac{1}{2}} (1-\frac{z}{\rho_{r}})^{-\frac{1}{2}}+O(1).
\end{eqnarray*}
Thus we obtain
\begin{equation}
[z^k]g(z)\sim (1)^{-k} \quad \text{\rm and} \quad
[z^n] g(h(z)) \sim m_{-\frac{1}{2}}\cdot \frac{n^{-\frac{1}{2}}}{\Gamma(\frac{1}{2})}
\cdot (\rho_{r})^{-n}.
\end{equation}
We next employ eq.~(103) of {\rm Theorem IX.16} of \cite{Flajolet:07} for
$k=xn^{\frac{1}{2}}$, where $x$ is contained in any compact subinterval of
$(0, +\infty)$. Then
\begin{equation}
[z^n]h(z)^k \sim (\rho_{r})^{-n}\cdot \frac{1}{n} \cdot
             {\rm Ray}(h_{\frac{1}{2}}\,x; \frac{1}{2}),
\end{equation}
where ${\rm Ray}(x; \frac{1}{2})=\frac{x}{2}\,\exp(-\frac{x^2}{4})$
is the Rayleigh density function. Hence
\begin{eqnarray*}
\mathbb{P}(X_n=k) & \sim & \frac{\Gamma(\frac{1}{2})}{m_{-\frac{1}{2}}}\,
\frac{1}{\sqrt{n}}\,{\rm Ray}(h_{\frac{1}{2}}\,x; \frac{1}{2})\\
& = & \frac{h_{\frac{1}{2}} \Gamma(\frac{1}{2})}{2m_{-\frac{1}{2}}} \cdot \frac{k}{n}
\cdot \exp(-\frac{(h_{\frac{1}{2}})^2}{4}\,\frac{k^2}{n})
\end{eqnarray*}
and the proof of the theorem is complete.
\end{proof}

\section{Discussion}

In this paper we demonstrated that the particular energy parametrization
of RNA secondary structures affects the class of minimum free energy
structures generated by DP-mfe folding algorithms in a subtle way.
Minimal changes in parametrization can induce significant changes to
the class of mfe-structures. We characterized the combinatorial impact
of these changes in terms of the distribution of irreducible substructures
which has practical implications on algorithmic level, i.e.~for the effect
of sparsification of mfe DP-folding of such structures.

We find the following dichotomy: the distribution of irreducible
substructures either is a discrete limit law or a central limit law.
In the former case mfe-structures contain only finitely many irreducible
substructures and the ratio of irreducible mfe-structures over all
mfe-structures becomes in the limit of long sequences a positive
constant. While this means ``just'' a constant time reduction for the
sparsification of the DP-routine, the reduction is typically in the
order of $90$ \% \cite{Fenix12} and consequently still of practical
interest.
In the latter case, the fact that the central limit distribution has a
mean that scales linearly with sequence length alone implies the
these mfe-structures contain a large number of very small irreducible
substructures. As these irreducibles are small it becomes more and more
unlikely to realize a mfe structure as an irreducible. As a result
sparsification has somewhat ``maximal'' effect, i.e.~a linear reduction
in time and space \cite{spar:07}.

From the work of \cite{Han12} we know that a natural RNA structure has
a finite $5'$-$3'$ distance. This means that natural RNA structures
contain only a finite number of irreducible substructures.
In the context of our dichotomy result this means that sparsification of
``realistically'' parameterized mfe structures leads to a constant time
reduction, in accordance with the findings in \cite{Fenix12}.

At the transition point, where the distribution shifts from a discrete to
a central limit law, a local limit law exists. Its density function is that
of a Rayleigh distribution. It is easy to ``test'' our main theorem for
the subcritical and supercritical regimes, see Fig.~\ref{F:distribution},
i.e.~to sample the predicted limit laws. The particular parameters for
the two scenarios displayed in Fig.~\ref{F:distribution} are given in
Tab.~\ref{T:sub-super}. By construction it is practically
impossible to localize the transition and sample the Rayleigh law.

In Fig.~\ref{F:subcritical} we detail two typical RNA secondary structures
sampled from the two regimes. Instead of presenting these structures as
diagrams we map them into trees, where nodes represent irreducible
substructures and edges are being drawn if irreducibles are nested.
The tree representation shows clearly that a small variation of energy
parameters can have a dramatic effect on the mfe structures. It is also
evident why sparsification works much more efficient in the supercritical
regime. Namely, in this case the irreducibles are less complex which implies
that it becomes increasingly unlikely to find any candidates.

\begin{table}
\caption{\small Parameters of (a) (subcritical regime) and (b)
(supercritical regime).}
\label{T:sub-super}      
\begin{tabular}{|l|l|l|l|l|l|l|l|l|}
\hline
 & $\alpha_1$ & $\alpha_2$ & $\alpha_3$ & $\beta_1$ & $\beta_2$ & $\gamma_1$ & $\gamma_2$  \\
\hline
subcritical & $-5$ & $-0.01$ & $7.53$ & $4$ & $-1$ & $-3.4$ & $-0.6$ \\
\hline
supercritical & $-5$ & $-0.01$ & $7.53$ & $2$ & $-1$ & $-10$ & $-3$ \\
\hline
\end{tabular}
\end{table}

\begin{figure}[ht]
\centerline{%
\epsfig{file=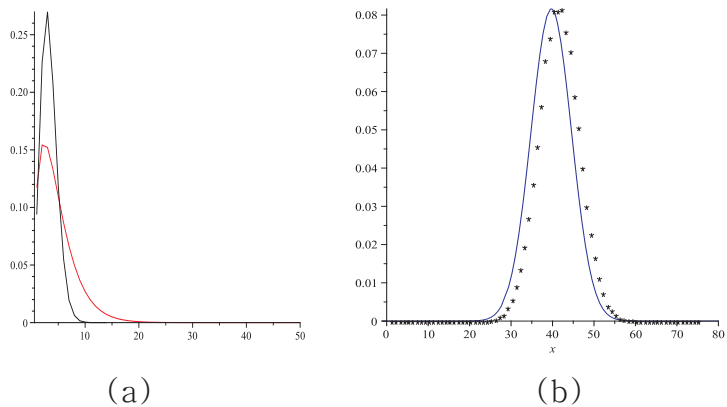,width=0.65\textwidth}\hskip15pt }
\caption{\small The subcritical regime $(a)$: sampling $10^5$ structures
of length $n=700$ (black) versus the discrete limit law (red) as predicted
by Theorem~\ref{T:quotient};
The supercritical regime $(b)$: sampling $10^6$ structures of length $n=10^3$
(black stars) versus the central limit distribution as predicted by
Theorem~\ref{T:quotient} (blue line).
}\label{F:distribution}
\end{figure}

\begin{figure}[ht]
\centerline{%
\epsfig{file=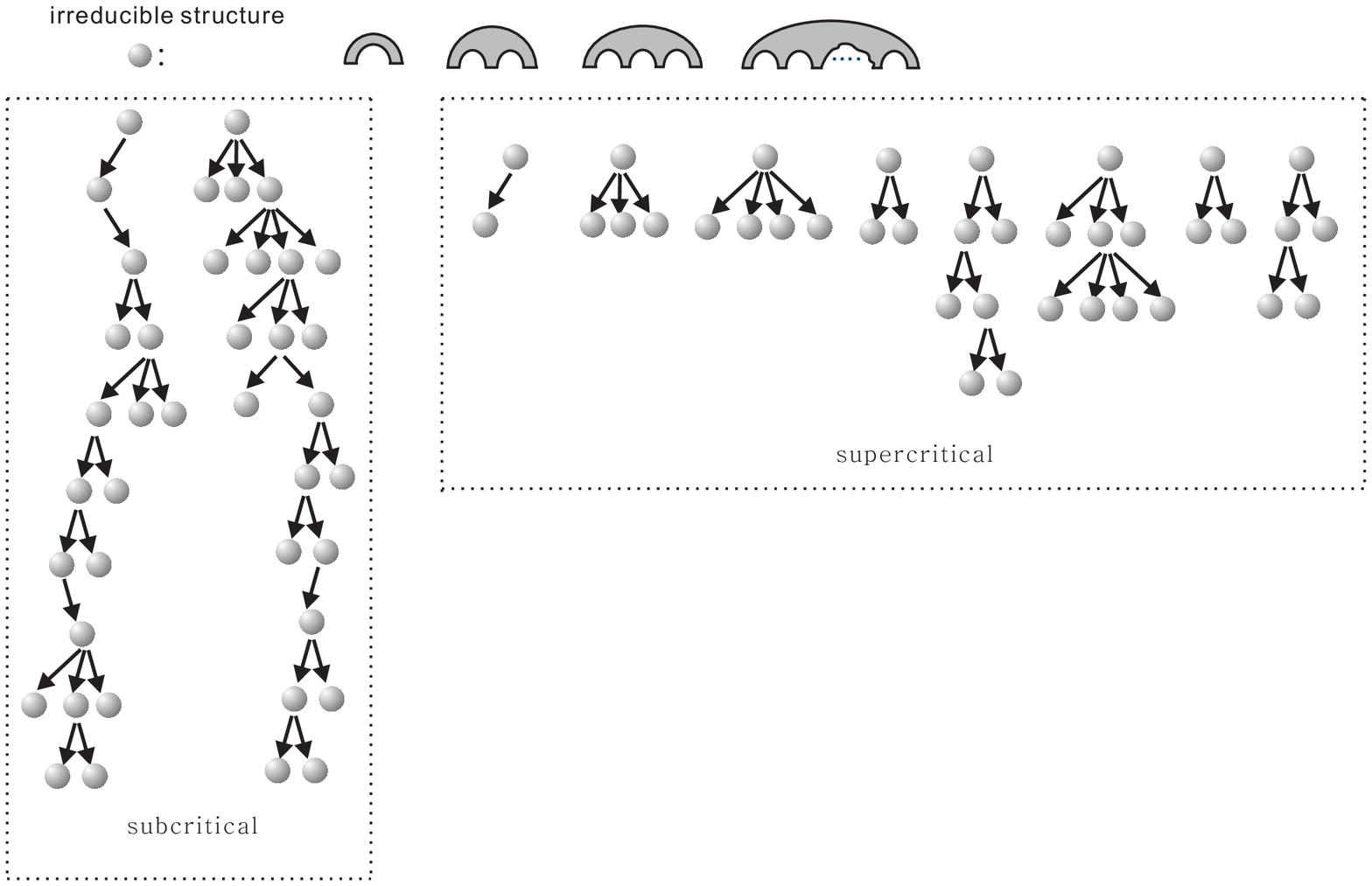,width=0.75\textwidth}\hskip15pt }
\caption{\small Tree representation of structures in the two regimes:
a typical subcritical structure (left) and a typical supercritical
structure (right).
One node in a tree represents an irreducible substructure and
an edge visualizes the nesting relation.
}\label{F:subcritical}
\end{figure}

\section{ Acknowledgments.}
We are grateful to Fenix W.D.~Huang for his help
with the sampling curves of Figure.~\ref{F:distribution}.


\end{document}